\numberwithin{equation}{section}
\theoremstyle{plain}
\newtheorem{theorem}{Theorem}[section]
\newtheorem{proposition}[theorem]{Proposition}
\theoremstyle{remark}
\newtheorem{remark}[theorem]{Remark}
\newtheorem{example}[theorem]{Example}
\theoremstyle{definition}
\newtheorem{definition}[theorem]{Definition}
\newtheorem{question}[theorem]{Question}
\newcommand{\LL}{{\mathscr L}}
\newcommand{\R}{\mathbb{R}}
\newcommand{\Q}{\mathbb{Q}}
\newcommand{\N}{\mathbb{N}}
\DeclareMathOperator{\diam}{diam}
\DeclareMathOperator{\Geo}{Geo}
\DeclareMathOperator{\GeoOpt}{GeoOpt}
\begin{document}

\title[Slopes and optimal maps]
{Slopes of Kantorovich potentials and existence of optimal transport
maps in \\ metric measure spaces}
\author{Luigi Ambrosio}
\author{Tapio Rajala}
\address{Scuola Normale Superiore\\
Piazza dei Cavalieri 7\\
I-56127 Pisa\\ Italy}
\email{luigi.ambrosio@sns.it}
\email{tapio.rajala@sns.it}

\subjclass[2000]{Primary 49Q20, 53C23.}
\keywords{Optimal transportation, geodesic metric space, non-branching, upper gradient}
\date{\today}

%%%%%%%%%%%%%%%%%%%%%%%%%%%%%%%%%%%%%%%%%%%%%%%%%%%%%%%%%%%%%%%%%%%%%

\begin{abstract}
 We study optimal transportation with the quadratic cost function in geodesic metric spaces
 satisfying suitable non-branching assumptions. We introduce and
 study the notions of slope along curves and along geodesics and we apply
 the latter to prove suitable generalizations of Brenier's theorem of existence
 of optimal maps.
\end{abstract}

%%%%%%%%%%%%%%%%%%%%%%%%%%%%%%%%%%%%%%%%%%%%%%%%%%%%%%%%%%%%%%%%%%%%%

\maketitle

%\tableofcontents

 \section{Introduction}

 The problem of finding an optimal way to transport mass has a long
 history, starting from Monge's seminal paper \cite{M1781}. The optimality of a transport
can be measured in many ways, depending on the choice of the cost
function. In this paper we focus on the case when the cost is the
square of the distance.

 Given two positive and finite measures $\mu$ and $\nu$ on some metric space $(X,d)$ with the same total mass,
 which we may normalize to 1, our task is then to study whether the infimum
 \begin{equation}\label{eq:Monge}
  \inf \int_Xd^2(x,T(x))d\mu(x),
 \end{equation}
 over all possible $\mu$-measurable maps $T \colon X \to X$ which send the measure $\mu$ to $\nu$, is attained.
 If such a minimizing map exists, we call it an optimal transport map between $\mu$ and
 $\nu$. Existence of optimal maps or even of admissible ones is
 problematic, for instance no admissible map exists when $\mu$ is a
 Dirac mass and $\nu$ is not a Dirac mass.

 Kantorovich's
 relaxed \cite{K1942, K1948} formulation of the optimal transport problem consists in finding the infimum
 \begin{equation}\label{eq:Kantorovich}
  \inf \int_{X\times X}d^2(x,y)\pi(x,y)
 \end{equation}
 over all possible transport plans, i.e. probability measures $\pi$ on $X\times X$ which have $\mu$ and $\nu$
 as marginals.
 Again, if there is a measure which attains the infimum, it is called an optimal transport plan between $\mu$ and $\nu$.
 Notice that transport plans can split measure, and so they avoid the problem faced by transport maps. In fact, not only
 the Kantorovich formulation of the problem is well-posed, but the
 infimum is attained (possibly infinite) under the only assumption
 that $(X,d)$ is complete and separable. Since we will be dealing
 with geodesic metric spaces, we will mostly work with the equivalent formulation in terms
 of geodesic transport plans, i.e. probability measures in the space $\Geo(X)$
 of constant speed geodesics parameterized on $[0,1]$, with marginal
 conditions at $t=0$ and $t=1$.

 In general, it seems to be a difficult problem to find necessary and sufficient conditions under which Monge's
 problem has a solution, but by now several sufficient conditions are known. For the quadratic cost
 in the Euclidean setting it was proved independently by
 Brenier \cite{B1991} and Smith and Knott \cite{SK1987} that there exists a unique optimal map $T$,
 given by the gradient of a convex function, provided that
 $\mu$ is absolutely continuous with respect to the Lebesgue measure. This
result was generalized to Riemannian manifolds by
 McCann \cite{M2001}, to Alexandrov spaces by Bertrand \cite{B2008}
 to the Heisenberg group by Ambrosio and Rigot \cite{AR2008} and,
 very recently, to non-branching metric spaces with Ricci curvature
 bounded from below (in the sense of Lott, Sturm and Villani) by Gigli
 \cite{G2011}. Notice that in all these results a reference measure
 $m$ (Lebesgue measure, Riemannian volume, Haar measure, etc.) plays a role,
 so the proper setting for this question is the
 family of metric measure spaces $(X,d,m)$.

 In another recent paper \cite{AGS2011}, a \emph{metric} Brenier
 theorem is proved under mild assumptions on $(X,d,m)$, see Theorem~10.3 and Remark~10.7
 therein. In the case
 when $(X,d)$ has bounded diameter and $m$ is a finite measure, the
 main assumption is the existence of bounds on the relative entropy along geodesics
 (a condition weaker than the $CD(K,\infty)$ condition of Lott,
 Sturm and Villani) and the metric Brenier theorem states that, for
 any optimal geodesic plans $\pi$, it holds
 \begin{equation}\label{eq:metbre}
 |\nabla^+\varphi|(\gamma_0)=d(\gamma_0,\gamma_1)\qquad\text{$\pi$-a.e. in $\Geo(X)$}
 \end{equation}
 (here $\varphi$ is any Kantorovich potential and $|\nabla^+\varphi|$
 is its ascending slope).
 In other words, the transportation distance depends $\mu$-a.e. only
 on the initial point.

 This result raises some questions that we plan to investigate in
 this paper: the first one is to understand under which additional
 assumptions one can really recover an optimal map, the second one
 is about the differentiability of $\varphi$ along geodesics used by
 the optimal plan.

 In connection with the first question we start from this heuristic
 idea more or less implicit in many proofs: under appropriate
 structural assumptions on the space,
 \eqref{eq:metbre} identifies the ``initial velocity'' of the
 geodesic. Indeed, assuming suitable non-branching assumptions on the space
 and on its tangent metric spaces we can perform a suitable blow-up
 analysis that leads to the existence of optimal maps. The proof of this
 result requires a detailed analysis of the proof of the metric Brenier
 theorem in \cite{AGS2011} and the introduction of a sharper notion
 of ascending slope, namely the ascending slope $|\nabla^+_g\varphi|$
 along geodesics. Since we believe that this concept has an independent
 interest we compare this slope to the usual one and to the
 slope along curves, we provide an example and we raise some open problems.
 Coming back to the existence of optimal maps, our result covers as a particular
 case the Euclidean, the Riemannian and the Alexandrov case, see also the paragraph
 immediately after Theorem~\ref{thm:BrenierNonBranch} for a more detailed discussion.

 In connection with the second question, it has already been proved in
 Theorem~10.4 of \cite{AGS2011} a ``differentiability in mean'', namely
 \begin{equation}\label{eq:inmean}
 \lim_{t\downarrow
 0}\frac{\varphi(\gamma_0)-\varphi(\gamma_t)}{d(\gamma_0,\gamma_t)}=|\nabla^+\varphi|(\gamma_0)
 \qquad\text{in $L^2(\pi)$.}
 \end{equation}
 This weak differentiability property plays an important role in the
 subsequent paper \cite{AGS2011b}, for the computation of the
 derivative of the entropy along geodesics.
 Here, under an additional doubling assumption on $m$, we are able
 to improve \eqref{eq:inmean} to a pointwise differentiability
 property, so that
 $$
 \varphi(\gamma_t)=\varphi(\gamma_0)-t|\nabla^+\varphi|^2(\gamma_0)+o(t)
 $$
 for $\pi$-a.e. $\gamma\in\Geo(X)$.

\noindent {\bf Acknowledgement.} The authors acknowledge the support
of the ERC ADG GeMeThNES. The second author also acknowledges the
support of the Academy of Finland, project no. 137528.

%%%%%%%%%%%%%%%%%%%%%%%%%%%%%%%%%%%%%%%%%%%%%%%%%%%%%%%%%%%%%%%%%%%%%
%%%%%%%%%%%%%%%%%%%%%%%%%%%%%%%%%%%%%%%%%%%%%%%%%%%%%%%%%%%%%%%%%%%%%

\section{Non-branching metric spaces}\label{sec:nonbranch}

Let us start by laying out the definitions for the metric spaces
that will be used in this paper. First of all, we will be working
exclusively in metric spaces $(X,d)$ which are complete and
separable. Second, by measure in $(X,d)$ we mean a nonnegative Borel
measure, finite on bounded sets. We will mainly consider
metric spaces $X$ equipped with a \emph{doubling measure} $m$ meaning that
there exists a constant $0 < C < \infty$ so that for all $0 < r < \diam(X)$
and $x \in X$ we have
\[
 m(B(x,2r)) \le Cm(B(x,r)).
\]
A related notion for metric spaces where the measure has not been
specified is that of a \emph{doubling metric space}, which means
that there exists an integer $N\geq 1$ so that, for all $0 < r <
\infty$, any ball of radius $2r$ can be covered by $N$ balls of
radius $r$. It is obvious that if there exists a doubling measure on
$X$ then the space $X$ has to be doubling as well. The converse is
also true for complete metric space, see for example \cite{LS1998}
and \cite{KRS2010}.

 We call any absolutely continuous map $\gamma \colon [a,b] \to X$ a curve and use the abbreviation $\gamma_s = \gamma(s)$.
 The length of the curve $\gamma$ is defined as
 \[
  l(\gamma) = \sup \left\{\sum_{i=1}^Nd(\gamma_{t_i},\gamma_{t_{i-1}}) ~:~
  a \le t_0 < t_1 < \cdots < t_N \le b, N \in \N\right\}.
 \]
 We call the curve $\gamma \colon [a,b] \to X$ a \emph{geodesic} if $l(\gamma) = d(\gamma_a,\gamma_b)$.
 The metric space $X$ itself is called \emph{geodesic} if any two points $x,\,y\in X$ can
 be connected with a geodesic, i.e. there exists a geodesic
 $\gamma \colon [a,b] \to X$ with $\gamma_a=x$ and $\gamma_b=y$. Sometimes, when there is no
 danger of confusion, we also call the image of a geodesic a geodesic.

 The speed of a curve $\gamma$ is given by
 \[
  |\dot\gamma|(t) = \lim_{s \to t}\frac{d(\gamma_s,\gamma_t)}{|s-t|}
 \]
 whenever the limit exists. It is not hard to prove, see for instance
 Theorem~1.1.2 in \cite{AGS2008},
  that it indeed exists at $\LL^1$-almost every point $t \in [a,b]$,
 where $\LL^1$ is the Lebesgue measure on $\R$, and that
 $l(\gamma)=\int_a^b|\dot\gamma|(t)dt$.

 We denote by $\Geo(X)$ the set of all constant speed geodesics in $X$ which are parametrized by $[0,1]$, namely
 $d(\gamma_s,\gamma_t)=|t-s|d(\gamma_0,\gamma_1)$ for all $s,\,t\in [0,1]$. By a reparameterization argument, constant
 speed geodesics connecting any two given points exist in any geodesic space. We equip the space $\Geo(X)$
 with the distance
 \[
  d^*(\gamma,\tilde\gamma) = \max_{t \in [0,1]}d(\gamma_t,\tilde\gamma_t)
 \]
 and note that $(\Geo(X),d^*)$ is also complete and separable since the underlying metric space is.
 We will also use the convenient notation of \emph{evaluation map} $e_t \colon \Geo(X) \to
 X$, defined as $e_t(\gamma) = \gamma_t$ for all $t \in [0,1]$.

 With the basic notation related to geodesics now fixed we are ready to introduce the two definitions of
 non-branching which play a crucial role in our results.

 \begin{definition}
  We call a geodesic metric space $(X,d)$ \emph{non-branching} if for any two constant speed geodesics
  $\gamma,\,\gamma' \colon [0,1] \to X$ with $\gamma_0 = \gamma'_0$ and $\gamma_s = \gamma'_s$ for some $s\in (0,1)$
  we have $\gamma_t = \gamma'_t$ for all $t \in [0,1]$.
 \end{definition}

 We would like
 to use non-branching on the level of the tangent spaces. However, two distinct geodesics of
 a metric space can collapse into a single geodesic of the tangent space in the blow-up. To control such collapsing we
 will assume a stronger version of non-branching.

 \begin{definition}
  We call a geodesic metric space $(X,d)$ \emph{strongly non-branching} if for any two constant speed geodesics
  $\gamma,\,\gamma' \colon [0,1] \to X$ with $\gamma_0 = \gamma'_0$, $\gamma_1 \ne \gamma'_1$ and
  $d(\gamma_0,\gamma_1) > 0$, we have
  \begin{equation}\label{eq:strongnonbrancdef}
   \liminf_{t\downarrow 0} \frac{d(\gamma_t,\gamma'_t)}{d(\gamma_0,\gamma_t)} > 0.
  \end{equation}
 \end{definition}

 In our main theorem, Theorem~\ref{thm:BrenierNonBranch}, we assume that the space is strongly non-branching
 and that at almost every point we have some non-branching tangent space.
 Before defining what we mean by tangent space we recall the definitions of Hausdorff- and
 Gromov-Hausdorff-distance. The \emph{Hausdorff-distance} between closed sets $A,\,B\subset X$ is defined as
 \[
  d_H(A,B) = \max\left\{\sup_{a\in A}\inf_{b\in B}d(a,b),\sup_{b\in B}\inf_{a\in A}d(a,b)\right\}.
 \]
 Using the Hausdorff-distance, the \emph{Gromov-Hausdorff-distance} between two metric spaces $(X,d_X)$ and $(Y,d_Y)$
 is then defined as
 \[
 d_{GH}(X,Y) = \inf d_H(f(X),g(Y)),
 \]
 where the infimum is taken over all metric spaces $(Z,d_Z)$ and isometries $f \colon X \to Z$, $g \colon Y \to Z$.
 Finally, a sequence $(X_n,d_n,x_n)_{n=1}^\infty$ of metric spaces $(X_n,d_n)$ and points $x_n\in X_n$ is said
 to converge to $(X,d,x)$ in the \emph{pointed Gromov-Hausdorff} sense if
 \[
  \lim_{n \to \infty} d_{GH}(\overline{B}_{X_n}(x_n,r),\overline{B}_X(x,r)) =
  0\qquad\forall r>0,
 \]
 where by $\overline{B}$ we denote the closed ball.
 Given a metric space $(X,d)$ and a scaling factor $r>0$, we define a rescaled metric $d_r$ on $X$ by setting
  \[
   d_r(x,y) = \frac1rd(x,y)
  \]
  for all $x,\,y\in X$.

 \begin{definition}
  Let $(X,d)$ be a metric space. We call a metric space $(Y,\rho)$ \emph{tangent} to $(X,d)$ at $x\in X$ if
  there exist a sequence $(r_n)\downarrow 0$ and $y\in Y$ so that
  \[
    (X,d_{r_n},x) \underset{n \to \infty}{\longrightarrow} (Y,\rho,y)
  \]
  in the pointed Gromov-Hausdorff convergence.
 \end{definition}

 Notice that our definition of a tangent space is weaker than Gromov's original notion. He required
 the tangent space to be the full limit of the spaces $(X,d_r,x)$ as $r\downarrow 0$, whereas in our definition
 we only require convergence along a subsequence. As a consequence, with our definition the space
 $(X,d)$ can in principle have a huge collection of different tangent spaces at a single point.

 We will use the following well-known result (see for instance \cite[Proposition~2.7]{AGS2011b})
 which allows us to move from the Gromov-Hausdorff convergence to Hausdorff-convergence.

 \begin{theorem}\label{thm:Gromov}
  If $(X_n,d_n) \to (X,d)$ in the Gromov-Hausdorff convergence
  then there exist a space $(Z,d_Z)$ and isometric embeddings
  $$
  i_n \colon (X_n,d_n) \to (Z,d_Z),\qquad i \colon (X,d) \to (Z,d_Z)
  $$ so that $i_n(X_n) \to i(X)$ in the Hausdorff convergence.\\
  In addition, if $(X_n,d_n)$ are equi-compact, then
  $(Z,d_Z)$ can be taken to be a compact metric space.
 \end{theorem}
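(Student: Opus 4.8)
The plan is to produce a single space $(Z,d_Z)$ into which all the $X_n$ and $X$ embed isometrically, by explicitly gluing them together. The starting point is the standard reformulation of Gromov--Hausdorff convergence (equivalent to the definition above) in terms of correspondences: since $d_{GH}(X_n,X)\to 0$, for each $n$ one can choose a correspondence $R_n\subseteq X_n\times X$ (that is, a set whose projections onto $X_n$ and onto $X$ are both surjective) whose distortion
\[
 \delta_n:=\sup\bigl\{|d_n(a,a')-d(p,p')| : (a,p),(a',p')\in R_n\bigr\}
\]
satisfies $\delta_n\to 0$. These correspondences encode the approximate isometries witnessing the convergence, and all the quantitative content of the proof will be extracted from the bound $\delta_n$.

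Next I would set $Z:=X\sqcup\bigsqcup_n X_n$ (disjoint union) and define a symmetric ``local'' weight $\rho$ on $Z\times Z$ by keeping the given metrics on each piece, declaring two distinct pieces $X_n$, $X_m$ to be at weight $+\infty$, and, for $a\in X_n$ and $p\in X$, setting
\[
 \rho(a,p)=\rho(p,a)=\inf\bigl\{d_n(a,b)+\tfrac{\delta_n}{2}+d(q,p) : (b,q)\in R_n\bigr\}.
\]
The desired distance is then the induced chain (path) pseudometric
\[
 d_Z(u,v)=\inf\Bigl\{\textstyle\sum_{i=0}^{k-1}\rho(z_i,z_{i+1}) : z_0=u,\ z_k=v,\ z_i\in Z\Bigr\},
\]
which is automatically a pseudometric dominated by $\rho$. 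Because the pieces $X_n$ are only linked to $X$ (and never directly to one another), every finite chain can be shortened to one that alternates through $X$, so the whole construction reduces to understanding two- and three-term chains.

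The heart of the argument, and the step I expect to be the main obstacle, is the \emph{non-shortening lemma}: $d_Z$ must agree with $\rho$ on every pair carrying finite weight, i.e. chains through other pieces are never strictly cheaper than the direct weight. The crucial instance is that a detour $a\to p\to a'$ through $X$ cannot beat $d_n(a,a')$ for $a,a'\in X_n$: choosing near-optimal correspondence pairs $(b,q),(b',q')\in R_n$, the detour has weight at least $d_n(a,b)+d_n(b',a')+\delta_n+d(q,q')$, and the distortion bound $d(q,q')\ge d_n(b,b')-\delta_n$ together with the triangle inequality in $X_n$ makes this $\ge d_n(a,a')$. The symmetric computation shows that detours through some $X_n$ do not shorten distances inside $X$. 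Consequently $d_Z$ restricts to $d_n$ on each $X_n$ and to $d$ on $X$, so the canonical inclusions are isometric embeddings; passing to the metric quotient $Z/\!\!\sim$ (identifying points at $d_Z$-distance $0$) turns the pseudometric into a genuine metric without collapsing distinct points of any single piece, since distances within a piece stay positive.

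Finally, the Hausdorff convergence is immediate from the construction: for $a\in X_n$ the correspondence supplies $p\in X$ with $(a,p)\in R_n$, whence $d_Z(a,p)\le\delta_n/2$, and symmetrically every $p\in X$ has a nearby point of $X_n$, so the images satisfy $d_H(i_n(X_n),i(X))\le\delta_n/2\to 0$. For the equi-compactness addendum I would take $Z$ to be the completion (equivalently, the closure) of $i(X)\cup\bigcup_n i_n(X_n)$: given $\varepsilon>0$, cover the compact space $X$ by finitely many $\varepsilon/2$-balls, note that all but finitely many $i_n(X_n)$ lie in the $\varepsilon/2$-neighbourhood of $i(X)$ and are thus covered by the corresponding $\varepsilon$-balls, and add finitely many balls for the remaining compact pieces; this yields a finite $\varepsilon$-net, so $Z$ is totally bounded and complete, hence compact.
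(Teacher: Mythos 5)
Your proof is correct, but note that there is nothing in the paper to compare it against: the result is quoted as well known, with a pointer to \cite[Proposition~2.7]{AGS2011b}, and no proof is given in this paper. What you have written is the standard gluing construction (the one in Burago--Burago--Ivanov, and essentially also the one behind the cited reference): replace Gromov--Hausdorff smallness by correspondences $R_n$ with distortion $\delta_n\to 0$, glue each $X_n$ to $X$ across $R_n$ with linking weight $\delta_n/2$, and verify the non-shortening lemma so that the chain pseudometric restricts to $d_n$ on $X_n$ and to $d$ on $X$. Your key computation is exactly right: a detour $a\to p\to a'$ through $X$ costs at least $d_n(a,b)+\delta_n+d(q,q')+d_n(b',a')\ge d_n(a,b)+d_n(b,b')+d_n(b',a')\ge d_n(a,a')$ by the distortion bound, and it is precisely the choice of \emph{half} the distortion as linking weight that makes the $\delta_n$'s cancel; the reduction of general chains to alternating ones and the induction are routine, as you say. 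The estimate $d_H(i_n(X_n),i(X))\le\delta_n/2$ and the compactness addendum are also fine. Two small touch-ups. First, in the addendum you start by covering ``the compact space $X$''; compactness of $X$ is not a hypothesis, but it is also not needed: total boundedness of $X$ follows by transporting a finite $\epsilon/4$-net of some $X_n$ (uniform total boundedness of the $X_n$ is what ``equi-compact'' provides) through a correspondence with $\delta_n$ small, and total boundedness is all your covering argument uses once $Z$ is defined as the completion of the union. Second, the paper defines Hausdorff distance between \emph{closed} sets, so strictly speaking one should pass to the closures of $i_n(X_n)$ and $i(X)$ in $Z$; this changes nothing, since $d_H$ is insensitive to taking closures.
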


%
%%%%%%%%%%%%%%%%%%%%%%%%%%%%%%%%%%%%%%%%%%%%%%%%%%%%%%%%%%%%%%%%%%%%%
%%%%%%%%%%%%%%%%%%%%%%%%%%%%%%%%%%%%%%%%%%%%%%%%%%%%%%%%%%%%%%%%%%%%%

\section{Gradients along geodesics}\label{sec:gradients}

 Let us recall some basic definitions in measure theory.
 The collection of \emph{universally measurable sets} of the space $X$, denoted by $\mathscr{B}^*(X)$,
 is the $\sigma$-algebra of the sets which are $\mu$-measurable for all finite nonnegative Borel measure
 $\mu$ of $(X,d)$. The collection of all Borel sets of $(X,d)$ will be denoted by $\mathscr{B}(X)$.

 Now we turn to our next set of definitions that concern metric differentials.

 \begin{definition}
  Given a function $f \colon X \to \R$ we define the \emph{lower ascending slope along geodesics} of $f$ at
  $x\in X$ as
  \[
   |\nabla_g^+f|(x) = \sup_{\gamma}\liminf_{s \downarrow 0} \frac{[f(\gamma_s)-f(x)]^+}{d(\gamma_s,x)},
  \]
  where $^+$ denotes the positive part and
  the supremum is taken over all nonconstant geodesics in $X$ that start from the point $x$.
 \end{definition}

Here \emph{ascending} refers to the fact that we are taking the
positive part of the difference quotient and \emph{lower} refers to
the fact that we are taking the $\liminf$, rather than the
$\limsup$.

 \begin{proposition}\label{prop:measurability}
  Suppose that $f \colon X \to \R$ is continuous. Then $|\nabla_g^+f|$ is universally measurable.
 \end{proposition}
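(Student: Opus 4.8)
The plan is to realize $|\nabla_g^+f|$ as a fiberwise supremum, along the evaluation map $e_0$, of a single Borel function on the Polish space $\Geo(X)$, and then to control the uncountable supremum by means of the projection theorem of descriptive set theory. Since the difference quotient $[f(\gamma_s)-f(x)]^+/d(\gamma_s,x)$ is invariant under monotone reparametrization, the supremum defining $|\nabla_g^+f|(x)$ is unchanged if we restrict to constant speed geodesics parametrized on $[0,1]$, i.e. to those $\gamma\in\Geo(X)$ with $e_0(\gamma)=x$ and $d(\gamma_0,\gamma_1)>0$. I would therefore set $U:=\{\gamma\in\Geo(X):d(\gamma_0,\gamma_1)>0\}$ and introduce
\[
 G(\gamma)=\liminf_{s\downarrow 0}\frac{[f(\gamma_s)-f(\gamma_0)]^+}{d(\gamma_s,\gamma_0)},\qquad \gamma\in U.
\]
Because $e_0$ and $e_1$ are $1$-Lipschitz for $d^*$, the map $\gamma\mapsto d(\gamma_0,\gamma_1)$ is continuous, so $U$ is open, hence Borel. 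Unwinding the definition and using that $\sup S>a$ holds precisely when some element of $S$ exceeds $a$, I obtain, for every $a\ge 0$,
\[
 \{x:|\nabla_g^+f|(x)>a\}=e_0\bigl(\{\gamma\in U:G(\gamma)>a\}\bigr).
\]
As $|\nabla_g^+f|$ is nonnegative (it vanishes at points from which no nonconstant geodesic emanates), these superlevel sets with $a\ge 0$ determine its measurability.

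The first technical step is to check that $G$ is Borel on $U$. For fixed $s\in(0,1]$ the map $\gamma\mapsto[f(\gamma_s)-f(\gamma_0)]^+/d(\gamma_s,\gamma_0)$ is continuous on $U$, being a composition of the continuous maps $\gamma\mapsto f(\gamma_s)$, $\gamma\mapsto f(\gamma_0)$ and $\gamma\mapsto d(\gamma_s,\gamma_0)$ with a denominator that does not vanish on $U$. For fixed $\gamma\in U$ the very same expression is a continuous function of $s$ on $(0,1]$, since $\gamma$ is a continuous curve, $f$ is continuous and $d(\gamma_s,\gamma_0)=s\,d(\gamma_0,\gamma_1)>0$. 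This continuity in $s$ lets me replace the infimum over real $s$ by an infimum over rationals, so that
\[
 G(\gamma)=\sup_{n\in\N}\ \inf_{\substack{s\in\Q\\ 0<s\le 1/n}}\frac{[f(\gamma_s)-f(\gamma_0)]^+}{d(\gamma_s,\gamma_0)},
\]
exhibiting $G$ as a countable supremum of countable infima of continuous functions, hence Borel. Consequently $B_a:=\{\gamma\in U:G(\gamma)>a\}$ is a Borel subset of $\Geo(X)$.

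The decisive step, and the point at which \emph{universal} rather than Borel measurability becomes essential, is the passage through $e_0$. The set $B_a$ is Borel in the Polish space $\Geo(X)$ and $e_0$ is continuous into the Polish space $X$; by the fundamental projection theorem the image of a Borel subset of a Polish space under a continuous map is an analytic set, and analytic sets are universally measurable. Hence $e_0(B_a)$ is universally measurable for every $a\ge 0$, and therefore so is $|\nabla_g^+f|$. The main obstacle is precisely this uncountable supremum over the fiber $e_0^{-1}(x)$: a naive argument only identifies each superlevel set as the continuous image of a Borel set, which in general need not be Borel; it is the theory of analytic sets that preserves measurability, at the unavoidable cost of weakening Borel to universal measurability, which is exactly the conclusion asserted by the proposition.
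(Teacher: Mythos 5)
Your proof is correct, and at top level it is the same as the paper's: realize each superlevel set of $|\nabla_g^+f|$ as the continuous image of a Borel subset of the Polish space $\Geo(X)$ and then invoke the fact that analytic (Suslin) sets are universally measurable. (That you push $\{G>a\}$ forward under $e_0$ while the paper projects the graph-type set $\{(x,\gamma):\gamma_0=x,\dots\}\subset X\times\Geo(X)$ onto $X$ is only a cosmetic difference.) The one step where you genuinely diverge is the verification that the $\liminf$ condition cuts out a Borel set, and there your version is actually the correct one. Writing $q_s(\gamma)=[f(\gamma_s)-f(\gamma_0)]^+/d(\gamma_s,\gamma_0)$, the paper expresses the set $\{\liminf_{s\downarrow 0}q_s>T\}$ as $\bigcap_{t\in\Q\cap(0,1)}\bigcup_{s\in\Q\cap(0,t)}\{q_s>T\}$; read literally, that intersection-of-unions says ``$q_s>T$ for arbitrarily small rational $s$'', which is a $\limsup$-type condition and defines a strictly larger set when the difference quotient oscillates. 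For instance, on $X=\R$ take $f(u)=u\,w(u)$ for $u\ge 0$ and $f\equiv 0$ for $u\le 0$, with $w$ continuous and oscillating between $0$ and $1$ as $u\downarrow 0$: then $|\nabla_g^+f|(0)=0$, yet $0$ lies in the projection of the paper's set for $T=1/2$. Your formula $G=\sup_{n}\inf_{s\in\Q\cap(0,1/n]}q_s$, legitimate because $s\mapsto q_s(\gamma)$ is continuous on $(0,1]$ when $f$ is continuous, encodes the $\liminf$ correctly as a countable supremum of countable infima of continuous functions; equivalently, the paper's display should be replaced by $\bigcup_{\epsilon\in\Q^+}\bigcup_{t\in\Q\cap(0,1)}\bigcap_{s\in\Q\cap(0,t)}\{q_s\ge T+\epsilon\}$. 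So your proposal not only matches the paper's strategy but repairs a quantifier slip in its proof; your restriction to the open set $U=\{d(\gamma_0,\gamma_1)>0\}$, which eliminates the $0/0$ quotients of constant geodesics, and your remark on why superlevel sets with $a\ge 0$ suffice, are also sound.
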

 \begin{proof}
  Let $T\in \R$ and consider the sublevel set $\{|\nabla_g^+f| > T\} \subset X$. For the universal measurability
  it is sufficient to show that this set is Suslin.
  Because $X$ is complete and separable, so are $\Geo(X)$ and $X \times \Geo(X)$. Therefore $\{|\nabla_g^+f| > T\}$,
  being the projection of the set
  \[
   \left\{(x,\gamma) ~:~ \gamma_0 = x,\,\liminf_{s \downarrow 0}
   \frac{[f(\gamma_s)-f(x)]^+}{d(\gamma_s,x)} > T \right\} \subset X \times \Geo(X)
  \]
  to the space $X$, is indeed Suslin since the projected set can be written as
  \[
   \bigcap_{t\in \Q \cap (0,1)} \bigcup_{s \in \Q \cap (0,t)}
   \left\{(x,\gamma) ~:~ \gamma_0 = x,\frac{[f(\gamma_s)-f(x)]^+}{d(\gamma_s,x)} > T \right\}
  \]
  and so it is Borel (countable intersection of countable unions of open sets).
 \end{proof}

 \begin{definition}
  A function $g \colon X \to [0,\infty]$ is an \emph{upper gradient along geodesics} of a
  function $f \colon X \to \R$ if for any $\gamma \in \Geo(X)$ we have
  \begin{equation}\label{eq:sugdefinition}
   |f(\gamma_0)-f(\gamma_1)| \le \int_\gamma g,
  \end{equation}
  where the integral along $\gamma$ is understood as
  \[
   \int_\gamma g = l(\gamma)\int_0^1g(\gamma_s)ds.
  \]
 \end{definition}

 The almost everywhere differentiability of Lipschitz functions on the real line implies that ascending slopes are
 upper gradients for Lipschitz functions. We include the easy proof of this fact here for the convenience of the reader.
 Recall that a function $f \colon X \to \R$ is called Lipschitz if there exists a constant $0\leq L < \infty$ so that for any
 two points $x,\,y \in X$ we have
 \[
  |f(x)-f(y)|\le Ld(x,y).
 \]

 \begin{proposition}\label{prop:lipgrad}
  Let $f \colon X \to \R$ be Lipschitz. Then the lower ascending slope along geodesics is an upper
  gradient along geodesics.
 \end{proposition}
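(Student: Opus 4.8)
The plan is to reduce the inequality \eqref{eq:sugdefinition} for $g = |\nabla_g^+f|$ to the one-dimensional theory of Lipschitz functions by restricting $f$ to the geodesic. Fix $\gamma \in \Geo(X)$; we may assume $\gamma$ is nonconstant, since otherwise $l(\gamma) = 0$ and both sides of \eqref{eq:sugdefinition} vanish. Set $L = d(\gamma_0,\gamma_1) = l(\gamma) > 0$ and consider $\phi(t) := f(\gamma_t)$ on $[0,1]$. Since $f$ is $L_f$-Lipschitz and $d(\gamma_s,\gamma_t) = |s-t|\,L$, the function $\phi$ is $L_fL$-Lipschitz, hence differentiable at $\LL^1$-a.e.\ $t$, with $f(\gamma_1) - f(\gamma_0) = \int_0^1 \phi'(t)\,dt$.

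The crux is the pointwise bound $|\phi'(t)| \le L\,|\nabla_g^+f|(\gamma_t)$ at every $t \in (0,1)$ where $\phi'(t)$ exists. To obtain it I would test the slope with the two halves of $\gamma$ issuing from $\gamma_t$, each reparametrized to constant speed on $[0,1]$: the forward geodesic $\sigma_u := \gamma_{t + u(1-t)}$ and the backward geodesic $\tau_u := \gamma_{t - ut}$, both nonconstant geodesics starting at $\gamma_t$. Writing $h = u(1-t)$ for $\sigma$, one has $d(\sigma_u,\gamma_t) = hL$, and by differentiability of $\phi$ from the right the quotient converges, so $\liminf_{u\downarrow 0}[f(\sigma_u) - f(\gamma_t)]^+/d(\sigma_u,\gamma_t) = [\phi'(t)]^+/L$; since this is one admissible competitor in the supremum defining $|\nabla_g^+f|(\gamma_t)$, it yields $[\phi'(t)]^+ \le L\,|\nabla_g^+f|(\gamma_t)$. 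The backward geodesic $\tau$ is handled symmetrically, with $h = -ut < 0$ and $d(\tau_u,\gamma_t) = |h|\,L$, giving $[-\phi'(t)]^+ \le L\,|\nabla_g^+f|(\gamma_t)$. As $|\phi'(t)| = \max\{[\phi'(t)]^+,[-\phi'(t)]^+\}$, the two one-sided bounds combine into the claimed estimate.

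With the pointwise bound in hand a.e., I would conclude by integration:
\[
|f(\gamma_0) - f(\gamma_1)| = \Bigl|\int_0^1 \phi'(t)\,dt\Bigr| \le \int_0^1 |\phi'(t)|\,dt \le L\int_0^1 |\nabla_g^+f|(\gamma_t)\,dt = \int_\gamma |\nabla_g^+f|.
\]
The only genuinely delicate point, and the one I would check carefully, is measurability: the integrand $t \mapsto |\nabla_g^+f|(\gamma_t)$ must be $\LL^1$-measurable for the last integral to make sense. This follows from Proposition~\ref{prop:measurability}, applicable since a Lipschitz function is continuous: $|\nabla_g^+f|$ is universally measurable, so its pullback by the Borel curve $t \mapsto \gamma_t$ is measurable with respect to the completion of the push-forward under $t\mapsto\gamma_t$ of Lebesgue measure on $[0,1]$, which is a finite Borel measure on $X$; hence the composition is $\LL^1$-measurable. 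Apart from this, the argument is a transparent matching of the one-sided difference quotients of $\phi$ with the forward and backward geodesic competitors in the slope, so the remaining care is only in tracking the reparametrization constants and the positive parts in the correct directions.
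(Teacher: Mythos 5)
Your proposal is correct and follows essentially the same route as the paper: restrict $f$ to the geodesic, use a.e.\ differentiability of the Lipschitz function $\phi = f\circ\gamma$, bound $|\phi'(t)|$ pointwise by $l(\gamma)\,|\nabla_g^+f|(\gamma_t)$, handle measurability via Proposition~\ref{prop:measurability}, and integrate. The only difference is that you spell out the pointwise bound explicitly (testing the slope with the forward and backward reparametrized halves of $\gamma$), a step the paper asserts without detail, so your write-up is a slightly more complete version of the same argument.
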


 \begin{proof}
  By Proposition~\ref{prop:measurability} the function $|\nabla_g^+f|$ is universally
  measurable, and it is easily seen that this implies the
  $\LL^{1}$-measurability of $|\nabla_g^+f|\circ\gamma$ (just
  consider the push forward under $\LL^{1}$ of $\gamma$), see also
  \cite[Lemma~2.4]{AGS2011}.
  Take $\gamma\in \Geo(X)$. The function $f \circ \gamma\colon [0,1] \to \R$
  is Lipschitz and therefore differentiable $\LL^1$-almost everywhere. In particular,
  $|(f \circ \gamma)'(t)| \le l(\gamma)|\nabla_g^+f|(\gamma_t)$ holds and both sides of the inequality are
  well defined at $\LL^1$-almost every point $t \in [0,1]$. Thus
  \[
   |f(\gamma_0)-f(\gamma_1)| = \left|\int_0^1 (f \circ \gamma)'(s)ds \right| \le \int_0^1 |(f \circ \gamma)'(s)|ds
                             \leq  l(\gamma)\int_0^1|\nabla_g^+f|(\gamma_s)ds.
  \]
 \end{proof}

 It is interesting to compare the ascending slope and the upper gradient defined along geodesics to
 the more commonly used versions. First of all, it is immediate that we always have
 \begin{equation}\label{eq:compareslopes}
  |\nabla_g^+f|(x)\le |\nabla_c^+f|(x) \le |\nabla^+f|(x),
 \end{equation}
 where the usual \emph{ascending slope} $|\nabla^+f|(x)$ of $f$ at a point $x$ is defined as
 \[
  |\nabla^+f|(x) = \limsup_{y \to x} \frac{[f(y)-f(x)]^+}{d(y,x)}
 \]
 and the \emph{lower ascending slope along curves} as
 \[
  |\nabla_c^+f|(x) = \sup_{\gamma}\liminf_{s \downarrow 0} \frac{[f(\gamma_s)-f(x)]^+}{d(\gamma_s,x)}
 \]
 with the supremum taken over all curves (recall that by convention all curves we consider are absolutely
 continuous). Moreover, the inequalities in \eqref{eq:compareslopes}
 can be strict. Notice also that the choice of the lower concept (i.e. with the $\liminf_s$)
is justifed by the fact that the upper concept is easily seen to coincide with $|\nabla^+f|$).

 Recall that we have the usual notion of an \emph{upper gradient} $g \colon X \to [0,\infty]$ of a function
 $f \colon X \to \R$
 if we require the inequality \eqref{eq:sugdefinition} to hold along all curves on $[0,1]$, where this time
 $\int_\gamma g$ is understood as
 $\int_0^1g(\gamma_s)|\dot\gamma_s|ds$.
 It is not difficult to show, following the same proof given in
 Proposition~\ref{prop:measurability}, that ascending slopes along curves are
 universally measurable. Moreover, as in Proposition~\ref{prop:lipgrad}, one can prove
 that ascending slopes along curves are upper gradients for Lipschitz functions.

 Ascending slopes along geodesics could be thought, identifying
 geodesics to the tangent space as in the theory of Alexandrov
 spaces, as directional one-sided derivatives. Hence,
 it is natural to ask if ascending slopes along geodesics are also upper gradients (in the usual sense)
 for Lipschitz functions. This is not true in general, as we will see in the next example.

 \begin{example}\label{ex:nongradient}
  There exist a separable complete geodesic metric space $(X,d)$ and a Lipschitz function
  $f \colon X \to \R$ so that $|\nabla_g^+f|$ is not an upper gradient of $f$.

  Let us first construct the metric space $(X,d)$. We start the construction by taking a unit line-segment,
  which we simply denote by $[0,1]$. Next for all $n \in \N$ and $0 \le k < 2^n$ we connect the points $k2^{-n}$ and
  $(k+1)2^{-n}$ in $[0,1]$  with an arc $A_{n,k}$ of length $(2-2^{-n})2^{-n}$. In Figure~\ref{fig:example} the arcs
  $A_{n,k}$ are drawn as half-circles. The space $X$ is then the disjoint union of the arcs $A_{n,k}$ and the initial
  line-segment $[0,1]$.

  We define the distance $d$ between two points $x,y \in X$ as
  \[
   d(x,y) = \inf \sum_{i}l(E_i),
  \]
  where the infimum is taken over all collections of $E_i$'s that connect the points $x$ and $y$,
  $E_i$ are subsets of the arcs and $l(E_i)$ is the length of the piece determined by the length of the arc.
  This way on each arc $A_{n,k}$ the distance is given by the natural distance determined by the length of the arc.
  See the left part of Figure \ref{fig:example} for an illustration of the space.

  Let us check that $(X,d)$ is geodesic. Let $x,\,y \in X$ be two distinct points. If it happens
  that $x$ and $y$ lie on the same arc then the segment of the arc joining the points is our geodesic. We may
  then assume that the points are not on the same arc. We may also assume that $x,y\in [0,1]$. If this is not the case,
  for example $x \notin [0,1]$, we simply notice that any curve connecting the points $x$ and $y$ must go via one of the end-points
  $x'$ and $x''$ of the arc in which $x$ lies in and that
  \[
  d(x,y) = \min\{d(x,x')+d(x',y), d(x,x'')+d(x'',y)\}.
  \]

  We can now find the geodesic between the points $x$ and $y$ with the following procedure. Let $(\gamma^i)_{i=0}^\infty$ be
  a sequence of curves joining $x$ to $y$ so that $\lim_{i \to \infty} l(\gamma^i) = d(x,y)$. Because the lengths of the arcs
  are chosen so that the shortest curve between points $k2^{-n}$ and $(k+1)2^{-n}$ is the arc $A_{n,k}$,
  there exists $i_0 \in \N$ so that each $\gamma^i$, $i \ge i_0$, contains some $A_{n,k}$ with
  \begin{equation}\label{eq:nbound}
   n \le \left\lfloor\frac{\log |x-y|}{\log 2} \right\rfloor+1
  \end{equation}
  and with some $k$. So, taking a subsequence of
  $(\gamma_i)$ we may assume that all the curves contain the same arc $A_{n,k}$. Continuing inductively in
  the same way with the end-points of this arc and the points $x$ and $y$, and finally using a diagonal argument,
  we obtain the geodesic.

  We define the Lipschitz function $f \colon X \to \R$ first on $[0,1]$ by setting $f|_{[0,1]}(x) = x$.
  This fixes the function on the end-points of all the arcs. We continue it inside the arcs by defining for all
  $n \in \N$ and $0 \le k < 2^n$
  \[
   f(\gamma_t) = \begin{cases}
                  k2^{-n} - 2^{-n+1}t, & \text{for }t \in [0,1/2]; \\
                  (k-3)2^{-n} + 2^{-n+2}t, & \text{for }t \in [1/2,1],
                 \end{cases}
  \]
  where $\gamma \colon [0,1] \to X$ is the constant speed geodesic joining $k2^{-n}$ to $(k+1)2^{-n}$ in $A_{n,k}$.
  See the right part of Figure~\ref{fig:example} for the graph of the function along a couple of the arcs and the line $[0,1]$.

  \begin{figure}
   \centering
   \includegraphics[width=0.9\textwidth]{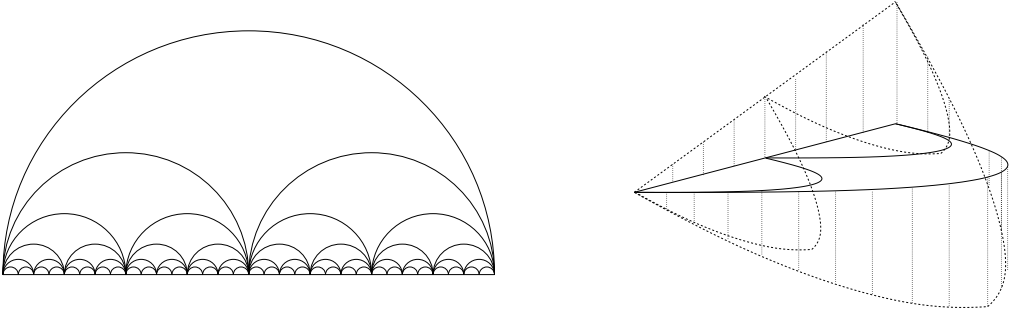}
   \caption{On the left is an illustration of the space $(X,d)$. The lengths of the curves are chosen so
            that the geodesics between two points on the interval prefer going along the longest curves.
            On the right is the graph of the Lipschitz function $f$ drawn along a part of the interval
            and along a couple of the construction curves joining the points of the interval.}
   \label{fig:example}
  \end{figure}

  Let us now show that $|\nabla_g^+f|(x) = 0$ for all $x \in [0,1]$. To see this take a geodesic $\gamma$
  starting from a point $x \in [0,1]$.
  If $\gamma$ near the point $x$ consists only of one piece of an arc the equality
  \[
   \liminf_{s \downarrow 0} \frac{[f(\gamma_s)-f(x)]^+}{d(\gamma_s,x)} = 0
  \]
  is immediate. Suppose then that for every $\epsilon > 0$ there exists a point $y \in [0,1]$ which is in the considered
  geodesic $\gamma$ and $0 < d(x,y) < \epsilon$. As we have noted before, the part of $\gamma$ that connects $x$ to $y$ must contain an
  arc $A_{n,k}$ with $n$ bounded as in \eqref{eq:nbound} and with some $k$. Moreover, we can take such an
  $A_{n,k}$ that $x > (k-1)2^{-n}$. Let $z$ be the middle point of $A_{n,k}$. Now $f(z) = (k-1)2^{-n} < f(x)$,
  and so indeed $|\nabla_g^+f|(x) = 0$.

  On the other hand, the constant speed curve $\gamma \colon [0,1] \to [0,1]$
  has length $2$ and is therefore an admissible test curve for the upper gradient property.
 \end{example}

 It is easy to see that the space $X$ in our previous example is not doubling and that it is extremely branching.
 In light of the example one could still hope many natural conjectures to be true.

 \begin{question}
  Which assumptions are needed on the metric space $(X,d)$ and on the measure $m$ to ensure for any Lipschitz
  function $f \colon X \to \R$ that
  \begin{enumerate}
   \item[(i)] the equality $|\nabla_g^+f|(x) = |\nabla_c^+f|(x)$ holds at $m$-almost every point $x \in X$?
   \item[(ii)] the function $|\nabla_g^+f|(x)$ is an upper gradient of $f$?
  \end{enumerate}
 \end{question}

Notice that if we assume that if $m$ is a doubling measure on $(X,d)$ 
then $|\nabla^\pm f|=|\nabla f|$ $m$-a.e.
in $X$ (see \cite[Proposition~2.5]{AGS2011} for the simple proof of this fact) for any Lipschitz function $f$. 

In addition, if $(X,d,m)$ supports a  $(1,1)$-Poincar\'e inequality, then we can apply Cheeger's theory to obtain 
$|\nabla f|\leq g$ $m$-a.e. in $X$ for any (weak) upper gradient of $f$. Choosing $g=|\nabla^+_cf|$ yields
$$
|\nabla^+_c f|=|\nabla^+f|=|\nabla f|\qquad\text{$m$-a.e. in $X$.}
$$
For the same reason, under the same doubling and Poincar\'e assumptions,
a positive answer to question (ii) implies a positive answer to question
(i): indeed, choosing $g=|\nabla^+_g f|$ one obtains 
$$
|\nabla_g^+f|=|\nabla^+_c f|=|\nabla^+f|=|\nabla f|\qquad\text{$m$-a.e. in $X$.}
$$

%%%%%%%%%%%%%%%%%%%%%%%%%%%%%%%%%%%%%%%%%%%%%%%%%%%%%%%%%%%%%%%%%%%%%
%%%%%%%%%%%%%%%%%%%%%%%%%%%%%%%%%%%%%%%%%%%%%%%%%%%%%%%%%%%%%%%%%%%%%

 \section{Mass transportation in metric spaces}\label{sec:transport}

 Before stating and proving our main results we briefly discuss in the next subsection the basic properties of
 transport plans
 and maps in the general metric space setting. A comprehensive treatment of the theory can be found for example in
 \cite{AGS2008} and \cite{V2009}.

 \subsection{Basic properties of transport plans}

 Let $\mathscr{P}(X)$ denote the set of all Borel probability measures on $X$.
 The Wasserstein distance between two measures $\mu,\,\nu \in \mathscr{P}(X)$ is defined as
 \begin{equation}\label{eq:definplan}
   W_2(\mu,\nu) = \left(\inf_{\gamma}\int_{X\times X}d^2(x,y)d\gamma(x,y)\right)^{1/2},
 \end{equation}
 where the infimum is taken over all \emph{transport plans} $\gamma$ between $\mu$ and $\nu$, i.e.
 measures $\gamma \in \mathscr{P}(X \times X)$ for which $\texttt{p}_\#^1\gamma = \mu$ and
 $\texttt{p}_\#^2\gamma = \nu$.
 Here the mappings $\texttt{p}^1$ and $\texttt{p}^2$ denote the projections to the first and second coordinate
 respectively.
 The notation $f_\#\mu$ for a measure $\mu \in \mathscr{P}(X)$ and a $\mu$-measurable
 mapping $f\colon X \to Y$ means the \emph{push-forward} measure defined as $f_\#\mu(A) = \mu(f^{-1}(A))$
 for all $A \in \mathscr{B}(Y)$. Notice that in general $W_2(\mu,\nu)$ might be infinite.
 We call a transport plan $\gamma_0$ between two measures $\mu,\,\nu \in \mathscr{P}(X)$,
 for which $W_2(\mu,\nu)<\infty$, \emph{optimal} if the infimum in \eqref{eq:definplan}
 is attained at $\gamma=\gamma_0$.

 Since we are dealing with geodesic spaces, we can equivalently consider geodesic transport plans.
 We define the set of \emph{geodesic plans} between $\mu$ and $\nu$ as the set of
 all
 $\pi \in \mathscr{P}(\Geo(X))$ for which $(e_0)_\#\pi = \mu$, $(e_1)_\#\pi =
 \nu$. We say that a geodesic plan is optimal, and write $\pi\in\GeoOpt(\mu,\nu)$, if
 \[
  \int_{\Geo(X)}d^2(\gamma_0,\gamma_1)d\pi(\gamma) = W_2^2(\mu,\nu)<\infty.
 \]
 Given an optimal geodesic plan $\pi \in \GeoOpt(\mu,\nu)$, it is clear that $(e_0,e_1)_\#\pi$ is an optimal plan.
 Conversely, making a measurable selection of constant speed geodesics
 $\gamma^{xy}$ from $x$ to $y$ and considering the law of
 $(x,y)\mapsto\gamma^{xy}$ under $\gamma$, any optimal plan can be
 ``lifted'' to an optimal geodesic plan with the same cost.

 The Kantorovich formulation of the transportation problem has also a very useful dual formulation: The minimum
 in \eqref{eq:Kantorovich} is equal to
 \[
  2\sup\left\{\int_X\varphi(x)d\mu(x) + \int_X\psi(y)d\nu(y)\right\},
 \]
 where the supremum is taken among all pairs $(\varphi,\psi) \in C_b^0(X) \times
 C_b^0(X)$ satisfying
 $\varphi(x) + \psi(y) \le \tfrac12 d^2(x,y)$.

 We define the \emph{$c$-transform} of a function $\varphi \colon X \to \R\cup\{-\infty\}$ as
 \[
  \varphi^c(x) = \inf_{y \in X}\left\{\frac{d^2(x,y)}{2} - \varphi(y)\right\}.
 \]
 A function $\psi$ is called \emph{$c$-concave} if $\psi = \varphi^c$ for some function $\varphi$.
 This terminology ($c$-transform, $c$-concavity) refers to a general cost function $c$. Here and in the sequel
 the cost function $c$ is given by the halved square of the distance.

 \begin{definition}
  Given an optimal geodesic plan $\pi \in \GeoOpt(\mu,\nu)$ we call a Borel function
  $\varphi \colon X \to \R\cup\{-\infty\}$
  a \emph{Kantorovich potential} (relative to the optimal geodesic plan $\pi$) if it is $c$-concave and
  \[
   \varphi(\gamma_0) + \varphi^c(\gamma_1) = \frac{d^2(\gamma_0,\gamma_1)}{2}
   \qquad\text{for $\pi$-a.e. }\gamma \in \Geo(X).
  \]
 \end{definition}

 Notice that because the Kantorovich potential $\varphi$ is $c$-concave we have $\varphi =
 (\varphi^c)^c$ and that we make no integrability assumption on
 $\varphi$ and $\varphi^c$.

 A set $\Gamma \subset X \times X$ is called \emph{cyclically monotone} if
 \[
  \sum_{i=1}^nd^2(x_i,y_i) \le \sum_{i=1}^nd^2(x_i,y_{\sigma(i)})
 \]
 for any $(x_1,y_1), \ldots, (x_n,y_n) \in \Gamma$ and permutation $\sigma$ of $\{1, \ldots, n\}$.

 Suppose that $W_2(\mu,\nu) < \infty$ and $\pi \in \GeoOpt(\mu,\nu)$. Then $(e_0,e_1)_\#\pi$ is supported on
 a cyclically monotone set and a Kantorovich potential relative to $\pi$ exists.

 \subsection{Brenier theorem in metric spaces}

 As a starting point we prove the following result which originates from \cite[Theorem 10.3]{AGS2011}. The difference
 compared to the original result is in the definition of the ascending slope, here replaced by the lower ascending
 slope. Also, we like to repeat the proof in our situation for the convenience of the reader.

 \begin{proposition}\label{prop:ags1}
  Suppose that $m$ is a finite measure on a bounded space $(X,d)$ and $\mu = \rho m \in \mathscr{P}(X)$, $\nu \in \mathscr{P}(X)$.
  Take $\pi \in \GeoOpt(\mu,\nu)$ and let $\varphi \colon X \to \R \cup \{-\infty\}$ be a Kantorovich potential relative to $\pi$.
  If there exists $\bar s\in (0,1)$ satisfying $(e_s)_\#\pi = \rho_sm$ for all $s\in (0,\bar s)$ and
  \begin{equation}\label{eq:energyestim}
   \limsup_{s \downarrow 0}\int_X \rho_s\log\rho_s dm < \infty,
  \end{equation}
  then
  \[
   |\nabla_g^+\varphi|(\gamma_0) = d(\gamma_0,\gamma_1)\qquad\text{for $\pi$-a.e. $\gamma \in
   \Geo(X)$.}
  \]
 \end{proposition}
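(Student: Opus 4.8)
The plan is to follow the strategy of the metric Brenier theorem of \cite{AGS2011}, but to run the upper gradient step with the sharper slope $|\nabla_g^+\varphi|$ instead of the usual ascending slope; this is legitimate precisely because, by Proposition~\ref{prop:lipgrad}, $|\nabla_g^+\varphi|$ is an upper gradient along geodesics of the Lipschitz function $\varphi$. First I would record two elementary facts. Since $X$ is bounded and $\varphi=(\varphi^c)^c$ is $c$-concave, $\varphi$ is Lipschitz, hence bounded, and $|\nabla_g^+\varphi|\le|\nabla^+\varphi|$ is bounded and universally measurable (Proposition~\ref{prop:measurability}). Moreover, from $c$-concavity and the identity $\varphi(\gamma_0)+\varphi^c(\gamma_1)=\tfrac12 d^2(\gamma_0,\gamma_1)$, valid for $\pi$-a.e.\ $\gamma$, the inequality $\varphi(y)\le\tfrac12 d^2(y,\gamma_1)-\varphi^c(\gamma_1)$ for all $y$ gives, after dividing by $d(y,\gamma_0)$ and letting $y\to\gamma_0$, the pointwise upper bound $|\nabla^+\varphi|(\gamma_0)\le d(\gamma_0,\gamma_1)$, and therefore $|\nabla_g^+\varphi|(\gamma_0)\le d(\gamma_0,\gamma_1)$ by \eqref{eq:compareslopes}, for $\pi$-a.e.\ $\gamma$.

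The core is the reverse integral inequality. Testing the $c$-concavity inequality at $y=\gamma_s$ yields, for $\pi$-a.e.\ $\gamma$ and every $s\in(0,1)$,
$$\varphi(\gamma_0)-\varphi(\gamma_s)\ge\tfrac12\bigl(d^2(\gamma_0,\gamma_1)-d^2(\gamma_s,\gamma_1)\bigr)=\Bigl(s-\tfrac{s^2}{2}\Bigr)d^2(\gamma_0,\gamma_1),$$
using $d(\gamma_s,\gamma_1)=(1-s)d(\gamma_0,\gamma_1)$. On the other hand, applying Proposition~\ref{prop:lipgrad} to the reparametrized geodesic $t\mapsto\gamma_{ts}$, which lies in $\Geo(X)$ and has length $d(\gamma_0,\gamma_s)=s\,d(\gamma_0,\gamma_1)$, gives
$$\varphi(\gamma_0)-\varphi(\gamma_s)\le s\,d(\gamma_0,\gamma_1)\int_0^1|\nabla_g^+\varphi|(\gamma_{ts})\,dt.$$
Combining the two and dividing by $s\,d(\gamma_0,\gamma_1)$ on $\{d(\gamma_0,\gamma_1)>0\}$, the complementary set being harmless since both sides vanish there, produces, for $\pi$-a.e.\ $\gamma$,
$$\Bigl(1-\tfrac{s}{2}\Bigr)d(\gamma_0,\gamma_1)\le\int_0^1|\nabla_g^+\varphi|(\gamma_{ts})\,dt.$$

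I would then integrate against $\pi$ and use Fubini, justified by boundedness and universal measurability of $|\nabla_g^+\varphi|$, to rewrite the right-hand side as $\int_0^1\bigl(\int_X|\nabla_g^+\varphi|\,\rho_{ts}\,dm\bigr)dt$, using $(e_{ts})_\#\pi=\rho_{ts}m$. Then let $s\downarrow0$: for each fixed $t\in(0,1]$ one has $ts\downarrow0$, the entropy bound \eqref{eq:energyestim} together with the de la Vall\'ee Poussin criterion gives uniform integrability of $\{\rho_r\}_{r\in(0,\bar s)}$ in $L^1(m)$, and since $\mu_r=(e_r)_\#\pi\to\mu=\rho m$ narrowly as $r\downarrow0$, the densities converge $\rho_r\to\rho$ weakly in $L^1(m)$; pairing with the bounded function $|\nabla_g^+\varphi|$ yields $\int_X|\nabla_g^+\varphi|\rho_{ts}\,dm\to\int_X|\nabla_g^+\varphi|\rho\,dm$. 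Dominated convergence in the $t$-integral then gives
$$\int_{\Geo(X)}d(\gamma_0,\gamma_1)\,d\pi\le\int_X|\nabla_g^+\varphi|\,\rho\,dm=\int_{\Geo(X)}|\nabla_g^+\varphi|(\gamma_0)\,d\pi,$$
and together with the pointwise upper bound the integrand $d(\gamma_0,\gamma_1)-|\nabla_g^+\varphi|(\gamma_0)$ is nonnegative with nonpositive integral, hence vanishes $\pi$-a.e., which is the claim.

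I expect the main obstacle to be the passage to the limit $s\downarrow0$: one must upgrade the narrow convergence $\mu_r\to\mu$ to weak $L^1$ convergence of the densities $\rho_r\to\rho$, which is exactly where the entropy bound \eqref{eq:energyestim} is indispensable through uniform integrability, and then interchange this limit with the $t$-integration. The remaining points—the Lipschitz regularity and boundedness of $\varphi$ on the bounded space $X$, the measurability of $(s,\gamma)\mapsto|\nabla_g^+\varphi|(\gamma_s)$ needed for Fubini, and the verification that $t\mapsto\gamma_{ts}$ is an admissible test geodesic—are routine once these ingredients are in place. Conceptually, the point worth stressing is that one never needs to exhibit an \emph{ascending} geodesic issuing from $\gamma_0$: the upper gradient property of $|\nabla_g^+\varphi|$ converts the (easy) forward descent of $\varphi$ into a lower bound on the slope at the interior points $\gamma_{ts}$, and the entropy bound transfers this, in the limit, to the endpoint.
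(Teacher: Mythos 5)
Your proposal is correct and follows essentially the same route as the paper's own proof: the pointwise upper bound $|\nabla_g^+\varphi|(\gamma_0)\le d(\gamma_0,\gamma_1)$ from $c$-concavity, the integral lower bound obtained by applying the upper-gradient-along-geodesics property of Proposition~\ref{prop:lipgrad} to the initial piece of the geodesic together with \eqref{eq:slope1st}, and the entropy bound \eqref{eq:energyestim} to pass to the limit $s\downarrow 0$ and conclude by comparing the two bounds $\pi$-a.e. The only differences are cosmetic: you work with first powers of the slope instead of squaring via Cauchy--Schwarz, and you justify the weak $L^1$ convergence of the densities (de la Vall\'ee Poussin plus narrow convergence) in detail, a fact the paper simply asserts as a consequence of \eqref{eq:energyestim}.
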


 \begin{proof}
 By the definition of the Kantorovich potential we have
 \begin{equation}\label{eq:fromKantdef}
  \varphi(\gamma_0) = \frac{d^2(\gamma_0,\gamma_1)}{2} - \varphi^c(\gamma_1)
 \end{equation}
 for $\pi$-a.e. $\gamma \in \Geo(X)$. On the other hand, for any $z \in X$ we have
 \begin{equation}\label{eq:fromtrans}
  \varphi(z) \le \frac{d^2(z,\gamma_1)}{2} - \varphi^c(\gamma_1).
 \end{equation}
 Thus combining these two we get that for $\pi$-a.e. $\gamma$ it
 holds
 \begin{align*}
  |\nabla_g^+\varphi|(\gamma_0) & \le \limsup_{z \to \gamma_0} \frac{[\varphi(z)-\varphi(\gamma_0)]^+}{d(z,\gamma_0)}
   \le \limsup_{z \to \gamma_0} \frac{[d^2(z,\gamma_1) - d^2(\gamma_0,\gamma_1)]^+}{2d(z,\gamma_0)} \\
  & \le \limsup_{z \to \gamma_0} \frac{d^2(z,\gamma_0) + 2d(\gamma_0,\gamma_1)d(z,\gamma_0)}{2d(z,\gamma_0)} = d(\gamma_0,\gamma_1).
 \end{align*}

 Let us now prove the converse inequality in an integral form.
 Taking $z = \gamma_t$ in \eqref{eq:fromtrans} and combining it with \eqref{eq:fromKantdef} we obtain
 \begin{equation}\label{eq:slope1st}
  \varphi(\gamma_0) - \varphi(\gamma_t) \ge \frac{d^2(\gamma_0,\gamma_1)}{2} - \frac{d^2(\gamma_t,\gamma_1)}{2}
   = \frac{2t-t^2}{2}d^2(\gamma_0,\gamma_1).
 \end{equation}
 Because $X$ is bounded, $\varphi$ is Lipschitz and so by Proposition~\ref{prop:lipgrad} the function $|\nabla_g^+\varphi|$ is
 an upper gradient of $\varphi$ along geodesics. Thus we have
 \[
  (\varphi(\gamma_0)-\varphi(\gamma_t))^2 \le \left(\int_0^t|\nabla_g^+\varphi|(\gamma_s)d(\gamma_0,\gamma_1)ds\right)^2
   \le td^2(\gamma_0,\gamma_1)\int_0^t|\nabla_g^+\varphi|^2(\gamma_s)ds.
 \]
 Dividing this by $d^2(\gamma_0,\gamma_t)$ and integrating over $\Geo(X)$ yields
 \begin{align*}
  \frac{1}{t}\int_0^t& \int_{X}|\nabla_g^+\varphi|^2(x)\rho_sdm(x) ds
   = \frac{1}{t}\int_0^t\int_{\Geo(X)}|\nabla_g^+\varphi|^2(\gamma_s)d\pi(\gamma)ds \\
    & \ge \int_{\Geo(X)}\left(\frac{\varphi(\gamma_0)-\varphi(\gamma_t)}{d(\gamma_0,\gamma_t)} \right)^2d\pi(\gamma)
     \ge \frac{2-t}{2}\int_{\Geo(X)}d^2(\gamma_0,\gamma_1)d\pi(\gamma)
 \end{align*}

  From the assumption \eqref{eq:energyestim} we know that
  \[
   \int_X g\rho_sdm \to \int_Xg\rho dm \quad \text{ as } s \downarrow 0 \text{ for all }g \in L^1(X,m).
  \]
  Because $m$ is finite and $|\nabla_g^+\varphi|$ is bounded,
  this holds also for $g = |\nabla_g^+\varphi|^2$. Therefore
  \begin{align*}
   \int_{X} |\nabla_g^+\varphi|^2d\mu & = \lim_{t \downarrow 0} \frac{1}{t}\int_0^t \int_{X}|\nabla_g^+\varphi|^2(x)\rho_sdm(x) ds\\
    &\ge \lim_{t \downarrow 0}\frac{2-t}{2}\int_{\Geo(X)}d^2(\gamma_0,\gamma_1)d\pi(\gamma).
  \end{align*}
 \end{proof}

 With the help of the Proposition~\ref{prop:ags1} we are now able to prove a Brenier-type theorem
 in strongly non-branching
 metric spaces.

 \begin{theorem}\label{thm:BrenierNonBranch}
  Assume that $(X,d)$ is a strongly non-branching geodesic metric space
  equipped with a doubling measure $m$, and that $\mu = \rho m \in \mathscr{P}(X)$, $\nu \in \mathscr{P}(X)$
  satisfy
  $W_2(\mu,\nu) < \infty$. Assume also that:
  \begin{itemize}
  \item[(a)] for $m$-almost every point $x\in X$ the space $X$ has a non-branching tangent at $x$;
  \item[(b)] there exists a transport plan $\pi \in \GeoOpt(\mu,\nu)$ such that
  for all $s\in [0,1)$ sufficiently small we have $(e_s)_\#\pi\ll m$
  and the densities $\rho_s$ satisfy
  \[
   \limsup_{s \downarrow 0}\int_X \rho_s\log\rho_s dm < \infty.
  \]
  \end{itemize}
  Then the optimal geodesic plan $\pi$ is given by a mapping $T \colon X \to X$,
   i.e. $\gamma_1 = T(\gamma_0)$ for $\pi$-a.e. $\gamma \in \Geo(X)$.
 \end{theorem}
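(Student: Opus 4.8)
The plan is to argue by contradiction: if $\pi$ does not concentrate on a graph, then a non-branching tangent space is forced to contain a branching geodesic. I would first disintegrate $\pi=\int_X\pi_x\,d\mu(x)$ with respect to $\mu=(e_0)_\#\pi$, so that $\pi_x$ is concentrated on $\{\gamma:\gamma_0=x\}$, and note that $\pi$ is induced by a map exactly when $(e_1)_\#\pi_x$ is a Dirac mass for $\mu$-a.e.\ $x$. Suppose this fails on a set of positive $\mu$-measure. After a standard localization reducing to the hypotheses of Proposition~\ref{prop:ags1}, the slope identity $|\nabla_g^+\varphi|(\gamma_0)=d(\gamma_0,\gamma_1)$ holds $\pi$-a.e.; interchanging this with the disintegration (using universally measurable sets as in Proposition~\ref{prop:measurability}), I restrict attention to the full-measure set of \emph{good} points $x$: density points at which the tangent is non-branching (assumption (a)) and for which $|\nabla_g^+\varphi|(x)=d(x,\gamma_1)$ for $\pi_x$-a.e.\ $\gamma$. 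At a non-Dirac good point I may pick $\gamma,\gamma'\in\mathrm{supp}\,\pi_x$ with $y:=\gamma_1\neq\gamma'_1=:y'$; since both obey the slope identity, $\ell:=d(x,y)=|\nabla_g^+\varphi|(x)=d(x,y')>0$.

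Next I extract the infinitesimal geometry at $x$. By the very definition of the slope along geodesics, $|\nabla_g^+\varphi|(x)=\ell$ supplies, for each $k$, a nonconstant geodesic $\beta^k$ from $x$ along which $\varphi$ \emph{ascends}: $\varphi(\beta^k_a)-\varphi(x)\ge(\ell-\tfrac1k)\,a$ for $a=d(x,\beta^k_a)$ small. The transport geodesics $\gamma,\gamma'$ instead \emph{descend}, with $d(\gamma_t,y)=(1-t)\ell$. The key point, derived purely from $c$-concavity, is that the ascending geodesics move away from both targets at essentially unit rate: from $\varphi(w)\le\tfrac12 d^2(w,y)-\varphi^c(y)$ and the optimality $\varphi(x)+\varphi^c(y)=\tfrac12\ell^2$ (and identically for $y'$) one gets $d(\beta^k_a,y)\ge\sqrt{\ell^2+2(\ell-\tfrac1k)a}\ge\ell+a-\tfrac{C}{k}a-O(a^2)$, and the same with $y'$. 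Combined with the triangle inequality and $d(\gamma_t,y)=(1-t)\ell$, this yields
\[
 d(x,\beta^k_a)+d(x,\gamma_t)\ \ge\ d(\beta^k_a,\gamma_t)\ \ge\ a+t\ell-\tfrac{C}{k}a-O(a^2),
\]
so the reversed $\beta^k$ concatenated with $\gamma$ (and, identically, with $\gamma'$) is a geodesic up to an error controlled by $1/k$ and the scale. Note that this is a statement about \emph{distances} in $X$ only; the function $\varphi$ enters solely through producing $\beta^k$ and the estimate above.

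I would then blow up at $x$. Fix a scaling sequence $r_n\downarrow0$ realizing a non-branching tangent $(Y,\varrho,o)$, and use Theorem~\ref{thm:Gromov} to realize $(X,d_{r_n},x)\to(Y,\varrho,o)$ as Hausdorff convergence of isometric copies inside a common space $Z$, which is proper on bounded sets by the doubling assumption. Passing to a diagonal limit in $n$ and $k$ (the error $\tfrac{C}{k}a+O(a^2)$ becomes $O(\tfrac{CR}{k}+R^2 r_n)$ on a tangent ball of radius $R$, hence vanishes), the geodesics $\gamma,\gamma'$ converge to geodesic rays $\sigma,\sigma'$ from $o$ and the $\beta^k$ produce a single ray $\beta$ from $o$. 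Here $\sigma\neq\sigma'$: strong non-branching \eqref{eq:strongnonbrancdef} gives $d(\gamma_t,\gamma'_t)\ge c\,d(x,\gamma_t)$, whence $\varrho(\sigma_s,\sigma'_s)\ge cs>0$ and the two rays cannot collapse under rescaling. The concatenation estimate survives the limit, so both $\beta\!\cdot\!\sigma$ and $\beta\!\cdot\!\sigma'$ are geodesics of $Y$ through $o$. These two geodesics share the nontrivial stem $\beta$ and then split, i.e.\ they branch at the interior point $o$, contradicting the non-branching of $Y$. Hence $(e_1)_\#\pi_x$ is a Dirac mass for $\mu$-a.e.\ $x$, and $\pi$ is induced by a map $T$.

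The main obstacle I anticipate is the rigorous blow-up: organizing the diagonal extraction so that the two transport geodesics and the ascending geodesics all converge along one scaling sequence, checking that the limiting curves are genuine unit-speed geodesics of $Y$ and not degenerate objects, and verifying simultaneously that strong non-branching keeps $\sigma$ and $\sigma'$ apart in the limit while the stem $\beta$ is genuinely common to both concatenations. The measurable-selection step producing $\gamma,\gamma'$ on a positive-measure set and the localization reducing to Proposition~\ref{prop:ags1} are routine but must be set up carefully.
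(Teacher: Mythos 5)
Your proposal is correct and follows essentially the same route as the paper's proof: localization plus Proposition~\ref{prop:ags1} to obtain the slope identity, ascending geodesics extracted from the definition of $|\nabla_g^+\varphi|$ shown to anti-align with the transport geodesics via $c$-concavity and the triangle inequality, a blow-up at a point with non-branching tangent (doubling plus Theorem~\ref{thm:Gromov}), and strong non-branching to keep the two limit directions apart, producing a branching contradiction in the tangent. The differences are only organizational: the paper passes to limit points $y,z,\tilde z$ rather than limit rays and concludes via a full-measure set $A$ of geodesics determined by their starting points instead of a disintegration, but the key estimates and the contradiction scheme are identical.
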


 \begin{proof}
 Suppose that there is no such $T$. If we then fix a point $x_0 \in X$ and consider the restricted and rescaled measures
 \[
  \pi_r = \frac{\pi|_{A(r)}}{\pi(A(r))}, \qquad \text{where }A(r) = \{\gamma \in \Geo(X), \gamma_t \in B(x_0,r) \text{ for all }t \in [0,1]\},
 \]
 we notice that for large enough $r > 0$ the assumptions of the theorem are satisfied and still there exists no such $T$.
 Therefore we may assume the space $X$ to be bounded.

 Let $\varphi$ be the Kantorovich potential relative to $\pi$.
 Let $x\in X$ be a point where the space $X$ has a non-branching tangent. Suppose that there are two geodesics
 $\gamma, \tilde\gamma \in \Geo(X)$ so that $\gamma_0=\tilde\gamma_0 = x$,
 \begin{equation}\label{eq:geodslope}
  |\nabla_g^+\varphi|(x) = d(x,\gamma_1) = d(x,\tilde\gamma_1)
 \end{equation}
 and
 \[
  \varphi^c(\tilde\gamma_1) = \varphi^c(\gamma_1) = \frac{d^2(x,\gamma_1)}{2} - \varphi(x).
 \]

  Let $(Y,d_Y)$ be a non-branching geodesic metric space tangent to $X$ at $x$ and $r_n \downarrow 0$ a sequence so that
  \[
    (X,d_{r_n},x) \rightarrow (Y,d_Y,0)
  \]
  in the pointed Gromov-Hausdorff convergence as $n \to \infty$. Since we assumed
  $(X,d,m)$ to be doubling, the spaces
  $(\overline{B}_{(X,d_{r_n})}(x,1),d_{r_n})$ are easily seen to be
  equi-compact. Indeed, for any $\epsilon>0$ we can find a maximal
  disjoint family of balls of radius $r_n\epsilon/2$ contained in
  $\overline{B}(x,r_n)$, so that the family of balls with doubled
  radius covers $\overline{B}(x,1)$, and use the doubling inequality
  $$
  \mu(B_r(y))\geq C\biggl(\frac r R\biggr)^\alpha\mu(B_R(x))
  \qquad\text{whenever $B_r(y)\subset B_R(x)$}
  $$
  (here $C>0$ and $\alpha>0$ depend on the doubling constant only)
  with $r=r_n\epsilon/2$ and $R=r_n$ to estimate the number of these balls
  with a constant depending only on $C$, $\alpha$ and $\epsilon$.
  %By taking a subsequence of $(r_n)_{n=1}^\infty$
  %we may assume that
  %\[
  % \sum_{n=1}^\infty d_{GH}\left((B_{(X,d_{r_n})}(x,1),d_{r_n}), (B_{(Y,d_Y)}(0,1),d_Y)\right)< \infty.
  %\]
  We can then apply Theorem~\ref{thm:Gromov} to obtain a compact space $(Z,d_Z)$ and isometric embeddings
  \[
   i_n \colon (\overline{B}_{(X,d_{r_n})}(x,1),d_{r_n}) \to (Z,d_Z),
   \qquad i \colon (\overline{B}_{(Y,d_Y)}(0,1),d_Y) \to (Z,d_Z)
  \]
  so that $i_n(\overline{B}_{(X,d_{r_n})}(x,1))\to i(\overline{B}_{(Y,d_Y)}(0,1))$ in the Hausdorff convergence.

  From \eqref{eq:geodslope} we get for every $n \in \N$ a constant speed geodesic
  $\gamma^n$ with $\gamma_0^n = x$ and a radius $R_n>0$ so that
  \[
   \frac{(\varphi(\gamma_s^n)-\varphi(x))^+}{d(\gamma_s^n,x)} > |\nabla_g^+\varphi|(x)-\frac{1}{n}
  \]
  for every $s \in (0,1)$ for which $d(\gamma_s^n,x) < R_n$.

  Now, possibly taking a subsequence of $(r_n)_{n=1}^\infty$ so that $r_n<R_n$, we get a sequence of points
  $(y_n)_{n=1}^\infty \subset X$ with $d(y_n,x)=r_n$ and
  \begin{equation}\label{eq:svito3}
   \frac{(\varphi(y_n)-\varphi(x))^+}{d(y_n,x)} > |\nabla_g^+\varphi|(x)-\frac{1}{n} = d(x,\gamma_1) - \frac{1}{n}.
  \end{equation}

  Notice also that for any $y \in X$ we have
  \begin{equation}\label{eq:svito2}
   \varphi(y) \le \frac{d^2(y,\gamma_1)}{2} - \varphi^c(\gamma_1) =
   \frac{d^2(y,\gamma_1)-d^2(x,\gamma_1)}{2} + \varphi(x).
  \end{equation}
  Writing $z_n = \gamma_s$ for the $s$ for which $d(\gamma_s,x)=r_n$, triangle inequality and
  geodesic property yield
  \begin{equation}\label{eq:svito1}
  d(y_n,\gamma_1)-d(y_n,z_n)\leq
  d(z_n,\gamma_1)=d(x,\gamma_1)-d(x,z_n).
  \end{equation}
  Hence, using first \eqref{eq:svito1}, then \eqref{eq:svito2} and eventually \eqref{eq:svito3} we have
  \begin{align*}
   0 & \le d(y_n,x) + d(x,z_n) - d(y_n,z_n) \le d(y_n,x) + d(x,\gamma_1) - d(y_n,\gamma_1)\\
     & = d(y_n,x) + \frac{d^2(x,\gamma_1)-d^2(y_n,\gamma_1)}{d(x,\gamma_1)+d(y_n,\gamma_1)} \\
     & = d(y_n,x) + \frac{1}{2}\frac{d^2(x,\gamma_1)-d^2(y_n,\gamma_1)}{d(y_n,x)} \frac{2d(y_n,x)}{d(x,\gamma_1)+d(y_n,\gamma_1)}\\
     & \le d(y_n,x) - \frac{\varphi(y_n)-\varphi(x)}{d(y_n,x)} \frac{2d(y_n,x)}{d(x,\gamma_1)+d(y_n,\gamma_1)} \\
     & < \left(1 -  \frac{2(d(x,\gamma_1)-\frac{1}{n})}{d(x,\gamma_1)+d(y_n,\gamma_1)}\right)d(y_n,x)=o(r_n).
  \end{align*}
  By taking a subsequence we find points $y,\,z \in Y$ so that
  \[
   i_n(y_n) \to i(y) \qquad\text{and}\qquad i_n(z_n) \to i(z).
  \]
  In particular
  \[
   d_Y(y,z) - d_Y(y,0) - d_Y(0,z) = \lim_{n \to \infty} \frac{d(y_n,z_n) - d(y_n,x) - d(x,z_n)}{r_n} = 0
  \]
  and so $0$ lies on some constant speed geodesic $\eta$ in $Y$ joining $y$ to $z$ (obtained by the concatenation
  of the geodesics joining $y$ to $0$ and $0$ to $z$).

  With a similar argument we can show that $0$ lies on some constant speed geodesic $\tilde\eta$ in $Y$ joining $y$
  to $\tilde{z}$, where $\tilde{z}$
  is obtained as the limit $i_n(\tilde{z}_n) \to i(\tilde{z})$ of the points $\tilde{z}_n$ which are taken from
  the geodesic
  $\tilde\gamma$ so that $d(\tilde{z}_n,x) = r_n$. Note that we might have to go to yet another subsequence
  to achieve the convergence to $\tilde{z}$.

  Because the space $X$ is strongly non-branching we have
  \[
   d_Y(z,\tilde{z}) \ge \liminf_{n \to \infty} \frac{d(z_n,\tilde{z}_n)}{r_n} > 0
  \]
  and so the geodesics $\eta$ and $\tilde\eta$ contradict the assumption that $Y$ is non-branching.

  This means that our assumptions on the geodesics $\gamma$ and $\tilde\gamma$ can not be satisfied. Therefore there exists a set
  $A \subset \Geo(X)$ so that $\pi(\Geo(X) \setminus A)=0$ and for every $x \in X$ there is at most one $\gamma \in A$ with
  $x = \gamma_0$. Using the set $A$ we can define the transport map $T$ as
  \[
   T(x) = \begin{cases}
           \gamma_1, & \text{when }x = \gamma_0 \text{ for some }\gamma \in A \\
           x, & \text{otherwise.}
          \end{cases}
  \]
 \end{proof}

 \begin{remark}
  One could prove Theorem \ref{thm:BrenierNonBranch} also under slightly different assumptions. Namely by weakening the
  definition of strongly non-branching metric space by replacing the liminf in \eqref{eq:strongnonbrancdef} to limsup,
  and then assuming that at almost every point all the tangent spaces to $X$ are non-branching. This modified theorem is achieved
  by letting the sequence of radii in the blow-up be dictated by the weakened form of strong non-branching
  property, namely choosing $r_n$ in such a way that
  $d(x,z_n)=d(x,\tilde{z}_n)=r_n$ and
  $\lim_nd(z_n,\tilde{z}_n)/r_n>0$.
 \end{remark}

 Theorem \ref{thm:BrenierNonBranch} applies, for example, when the space $(X,d)$ is a finite dimensional Alexandrov space and $m$
 is the corresponding volume measure on $X$. The estimate \eqref{eq:energyestim} on the relative entropy follows in this case
 from the result of Petrunin \cite{P2010} which shows that in Alexandrov spaces the functional
 \begin{equation}\label{eq:renyientropy}
    \mu \mapsto \int_X \rho^{1-\frac{1}{N}}dm
 \end{equation}
 is concave along Wasserstein geodesics. Notice that a different proof for the Brenier theorem in Alexandrov
 spaces was already given by Bertrand in \cite{B2008}.

 Brenier theorem has been recently established by Gigli \cite{G2011} in non-branching spaces with Ricci-curvature bounded from below.
 This generalizes the previous result by Bertrand and it covers for example the case where the functional \eqref{eq:renyientropy}
 is concave along geodesics in the Wasserstein space $(\mathcal{P}(X),W_2)$ of a non-branching space $(X,d)$. Whereas our proof
 of Theorem \ref{thm:BrenierNonBranch} is based on the behaviour of blow-ups and the Kantorovich potential, the proof by Gigli
 relies on the concavity of the functional and does not use the Kantorovich potential at all. Notice that because of the different
 techniques used in the proofs our geometric assumptions on the metric space $X$ differ from those assumed by Gigli and hence the
 two theorems cover different collection of metric spaces.

 It is also important to notice that our Theorem \ref{thm:BrenierNonBranch} by no means covers all the cases where
 the Brenier theorem is known to hold. For example the Brenier theorem holds in the Heisenberg group \cite{AR2008}, but
 it is not difficult to see that the Heisenberg group is not strongly non-branching.

 We end this paper with an improvement of \cite[Theorem 10.4.]{AGS2011} in the case where the reference measure $m$ is doubling.
 In \cite{AGS2011} it was shown that without the assumption that $m$ is doubling we have the conclusion
 \[
   \lim_{t \downarrow 0}\frac{\varphi(\gamma_0)-\varphi(\gamma_t)}{d(\gamma_0,\gamma_t)} = d(\gamma_0,\gamma_1)
             \qquad \text{ in }L^2(\Geo(X),\pi)
 \]
 in the following theorem. Because our Proposition \ref{prop:ags1} was proved in the case where the space $X$ is
 bounded, we
 will make the same boundedness assumption here. As in many of the results in \cite{AGS2011} we could remove
 this assumption by requiring
 the density of the initial measure $\mu$ with respect to $m$ to be uniformly bounded away from zero.

 \begin{theorem}\label{thm:BrenierImprovement}
  Let $m$ be a doubling measure on a bounded metric space $X$ and $\mu = \rho m \in \mathscr{P}(X)$,
  with $\rho>0$ $m$-a.e. in $X$ and $\nu \in \mathscr{P}(X)$. Let
  $\pi\in \GeoOpt(\mu,\nu)$ and $\varphi \colon X \to \R \cup\{-\infty\}$ be a
  Kantorovich potential relative to $\pi$ satisfying
  \begin{equation}\label{eq:newhypo}
  |\nabla_g^+\varphi|(\gamma_0)=d(\gamma_0,\gamma_1)
  \qquad\text{$\pi$-a.e. in $\Geo(X)$.}
  \end{equation}
  Further assume that there exists $\bar s\in (0,1]$ such that
  for all $s \in [0,\bar s)$ we have $(e_s)_\#\pi\ll m$.
  Then for $\pi$-a.e. $\gamma \in \Geo(X)$ we have
  \[
   \lim_{t \downarrow 0}\frac{\varphi(\gamma_0)-\varphi(\gamma_t)}{d(\gamma_0,\gamma_t)} = d(\gamma_0,\gamma_1).
  \]
 \end{theorem}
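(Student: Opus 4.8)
The plan is to establish separately the two one-sided bounds
\[
 \liminf_{t\downarrow 0}\frac{\varphi(\gamma_0)-\varphi(\gamma_t)}{d(\gamma_0,\gamma_t)}\ge d(\gamma_0,\gamma_1),\qquad
 \limsup_{t\downarrow 0}\frac{\varphi(\gamma_0)-\varphi(\gamma_t)}{d(\gamma_0,\gamma_t)}\le d(\gamma_0,\gamma_1)
\]
for $\pi$-a.e.\ $\gamma\in\Geo(X)$, which together yield the claim. We may discard the geodesics with $d(\gamma_0,\gamma_1)=0$, on which the difference quotient is immaterial, and work on the $\pi$-full set where $\varphi$ is a genuine Kantorovich potential. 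The lower bound is elementary and holds pointwise there: exactly as in \eqref{eq:slope1st}, inserting $z=\gamma_t$ into \eqref{eq:fromtrans} and combining with \eqref{eq:fromKantdef} gives $\varphi(\gamma_0)-\varphi(\gamma_t)\ge\tfrac{1}{2}(2t-t^2)d^2(\gamma_0,\gamma_1)$; dividing by $d(\gamma_0,\gamma_t)=t\,d(\gamma_0,\gamma_1)$ and letting $t\downarrow 0$ produces the first inequality.

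The whole difficulty, and the only place the doubling hypothesis is used, lies in the upper bound. The obstruction is conceptual: the ascending slope $|\nabla^+\varphi|$ controls $\varphi$ only in the direction of increase, whereas $t\mapsto\varphi(\gamma_t)$ decreases along the transporting geodesic, so $|\nabla^+\varphi|$ by itself cannot bound the limsup of the difference quotient. This is precisely why \cite{AGS2011} could only reach the $L^2$-statement \eqref{eq:inmean}. Doubling removes the obstruction through \cite[Proposition~2.5]{AGS2011}, which gives $|\nabla^+\varphi|=|\nabla\varphi|$ $m$-a.e., where $|\nabla\varphi|(x)=\limsup_{y\to x}|\varphi(y)-\varphi(x)|/d(y,x)$ is the full two-sided slope (note that $\varphi$ is Lipschitz because $X$ is bounded). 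On the other hand, the first chain of inequalities in the proof of Proposition~\ref{prop:ags1} shows, using only that $\varphi$ is a Kantorovich potential, that $|\nabla^+\varphi|(\gamma_0)\le d(\gamma_0,\gamma_1)$ for $\pi$-a.e.\ $\gamma$. Since $\mu=\rho m\ll m$ and $(e_0)_\#\pi=\mu$, the $m$-negligible exceptional set for the slope identity is met by $\gamma_0$ only on a $\pi$-negligible set, so for $\pi$-a.e.\ $\gamma$ we obtain $|\nabla\varphi|(\gamma_0)=|\nabla^+\varphi|(\gamma_0)\le d(\gamma_0,\gamma_1)$.

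It then remains to specialize the defining limsup of $|\nabla\varphi|(\gamma_0)$ to the one-parameter approach $y=\gamma_t$. As $\gamma_t\to\gamma_0$ with $\gamma_t\ne\gamma_0$, restricting to these points can only lower the limsup, whence for $\pi$-a.e.\ $\gamma$
\[
 \limsup_{t\downarrow 0}\frac{\varphi(\gamma_0)-\varphi(\gamma_t)}{d(\gamma_0,\gamma_t)}
 \le\limsup_{t\downarrow 0}\frac{|\varphi(\gamma_t)-\varphi(\gamma_0)|}{d(\gamma_t,\gamma_0)}
 \le|\nabla\varphi|(\gamma_0)\le d(\gamma_0,\gamma_1).
\]
Intersecting this $\pi$-full set with the one carrying the lower bound forces the limit to exist and equal $d(\gamma_0,\gamma_1)$, $\pi$-a.e., as desired. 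I expect the genuine difficulty to be concentrated in the upper bound, and specifically in recognizing that doubling upgrades the ascending slope $|\nabla^+\varphi|$ to the full slope $|\nabla\varphi|$, which is what governs the descending behaviour of $\varphi$ along the geodesic; once this is available the estimate is immediate. It is worth noting that this route uses neither the $L^2$-convergence \eqref{eq:inmean} nor, apparently, the hypothesis \eqref{eq:newhypo}.
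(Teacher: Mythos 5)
Your proof is correct, and it takes a genuinely different route from the paper's. The paper stays entirely inside the geodesic-slope framework: it writes the difference quotient as an average of $|\nabla_g^+\varphi|$ along $\gamma|_{[0,t]}$ (the upper gradient property of Proposition~\ref{prop:lipgrad}) and reduces the theorem to the estimate \eqref{eq:aes}, namely $|\nabla_g^+\varphi|(\gamma_s)\le(1+\delta)d(\gamma_0,\gamma_1)$ for $\LL^1$-a.e.\ small $s$; proving that estimate is where every remaining hypothesis enters ($\rho>0$ together with \eqref{eq:newhypo} transfers the slope identity from $\mu$-a.e.\ to $m$-a.e.\ point, $(e_s)_\#\pi\ll m$ and Fubini place slope-realizing geodesics $\hat\gamma$ at a.e.\ intermediate point $\gamma_s$, doubling supplies Lebesgue points of $|\nabla_g^+\varphi|$, and cyclical monotonicity bounds $d(\hat\gamma_0,\hat\gamma_1)$). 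You instead exploit doubling only through the $m$-a.e.\ identity $|\nabla^{\pm}\varphi|=|\nabla\varphi|$ for Lipschitz functions --- a fact the paper itself records in Section~\ref{sec:gradients} with reference to \cite[Proposition~2.5]{AGS2011} --- and combine it with the elementary bound $|\nabla^+\varphi|(\gamma_0)\le d(\gamma_0,\gamma_1)$ $\pi$-a.e., which is exactly the first chain of inequalities in Proposition~\ref{prop:ags1} and uses nothing but the Kantorovich relations. Since $(e_0)_\#\pi=\mu\ll m$, the one-sided Kantorovich information thereby controls the two-sided slope at $\gamma_0$, which in turn dominates the descending difference quotient; this is the conceptual shortcut the paper does not take. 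Your closing observation is accurate and worth emphasizing: your argument uses neither \eqref{eq:newhypo}, nor $\rho>0$, nor absolute continuity of $(e_s)_\#\pi$ for $s>0$, so it actually proves a formally stronger theorem (any Kantorovich potential relative to any optimal geodesic plan with $\mu\ll m$ on a bounded doubling space works). What the paper's longer route buys is the intermediate statement \eqref{eq:aes} itself --- a continuity-type property of $|\nabla_g^+\varphi|$ along the geodesic at its starting point, of independent interest and aligned with the paper's focus on slopes along geodesics --- which your argument bypasses entirely, saying nothing about the slope at points $\gamma_s$ with $s>0$. Two small points you should make explicit in a write-up: the $m$-null set where $|\nabla^+\varphi|\ne|\nabla\varphi|$ should be taken Borel (slopes of continuous functions are Borel measurable), so that $\mu$-negligibility genuinely yields a $\pi$-null set of geodesics; and the constant geodesics, on which the quotient is $0/0$, must be discarded as you do, since the statement is vacuous there.
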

 \begin{proof} If the lower ascending slope along geodesics of the
 Kantorovich potential were continuous, the theorem would follow
 immediately from the fact that the lower ascending slope is an
 upper gradient. This is not true in general, but what we can prove
 using density points and cyclical monotonicity is that for
 $\pi$-almost every geodesic the lower ascending slope is continuous
 along the geodesic at its starting point.

  As we have seen in the proof of Proposition \ref{prop:ags1}, in \eqref{eq:slope1st}, the inequality
  \[
   \liminf_{t \downarrow 0}\frac{\varphi(\gamma_0)-\varphi(\gamma_t)}{d(\gamma_0,\gamma_t)} \ge d(\gamma_0,\gamma_1)
  \]
  holds for $\pi$-a.e. $\gamma \in \Geo(X)$.

  On the other hand, by Proposition~\ref{prop:lipgrad} we know that $|\nabla_g^+\varphi|$ is an upper gradient of
  $\varphi$ along geodesics and so for all $\gamma \in \Geo(X)$ the estimate
  \[
   \frac{\varphi(\gamma_0)-\varphi(\gamma_t)}{d(\gamma_0,\gamma_t)}
    \le \frac{1}{d(\gamma_0,\gamma_t)} \int_{\gamma|_{[0,t]}}|\nabla_g^+\varphi|
  \]
  holds for all $t \in (0,1)$. So, our claim follows if we can show with any $\delta > 0$ that for
  $\pi$-a.e. $\gamma \in \Geo(X)$
  \begin{equation}\label{eq:aes}
    |\nabla_g\varphi^+|(\gamma_s) \le (1+\delta)d(\gamma_0,\gamma_1) \qquad \text{for } \LL^1\text{-a.e. }s \in (0,t)
  \end{equation}
  when $t>0$ (depending on $\delta$ and $\gamma$) is small enough.

  Because $\rho>0$, we know from \eqref{eq:newhypo} that for $m$-a.e. $x \in X$ there exists
  $\gamma^x \in \Geo(X)$ with $\gamma^x_0 = x$ and
  $|\nabla_g^+\varphi|(x) = d(\gamma^x_0,\gamma^x_1)$.  When we combine this with the assumption $(e_s)_\#\pi \ll m$,
  for any $s \in (0,\bar s)$, we get
  for $\pi$-a.e. $\gamma \in \Geo(X)$ a curve $\hat\gamma \in \Geo(X)$ so that $\gamma_s = \hat\gamma_0$ and
  \begin{equation*}\label{eq:gradagree}
  |\nabla_g^+\varphi|(\hat\gamma_0) = d(\hat\gamma_0,\hat\gamma_1).
  \end{equation*}
  Hence, by Fubini's theorem we know that for $\pi$-a.e. $\gamma \in \Geo(X)$
  a curve $\hat\gamma \in \Geo(X)$ with the above properties exists
  for $\LL^1$-a.e. $s \in (0,\bar s)$.

  Our task is now to estimate $d(\hat\gamma_0,\hat\gamma_1)$ from above. Because $m$ is doubling, it is enough to prove this
  for $\gamma \in \Geo(X)$ for which $\gamma_0$ is a Lebesgue point of $|\nabla_g^+\varphi|$.
  Let $\epsilon > 0$ and take $t \in (0,1)$ so small that
  for every $0 < s < t$ and $x \in B(\gamma_0, r_s)$, where $r_s = 2sd(\gamma_0,\gamma_1)$, there exists
  $\tilde\gamma \in \Geo(X)$ with $\tilde\gamma_0 \in B(x,\epsilon r_s)$ and
  \begin{equation}\label{eq:Lebesguepoint}
  d(\tilde\gamma_0,\tilde\gamma_1)=
   |\nabla_g^+\varphi|(\tilde\gamma_0)<(1+\epsilon)|\nabla_g^+\varphi|(\gamma_0)
   =(1+\epsilon)d(\gamma_0,\gamma_1).
  \end{equation}
  Define $q = s d(\gamma_0,\gamma_1)/d(\hat\gamma_0,\hat\gamma_1)$ and let $x = \hat\gamma_q \in B(\gamma_0, r_s)$.
  With this choice of $x$ let $\tilde\gamma \in \Geo(X)$ be as above. Notice that we may assume $q < s$, as
  otherwise the upper bound on $ d(\tilde\gamma_0,\tilde\gamma_1)$
  immediately follows. The selected curves are illustrated in
  Figure~\ref{fig:cycmon}.

  \begin{figure}
   \psfrag{x}{$\hat\gamma_q$}
   \psfrag{a0}{$\gamma_0$}
   \psfrag{a1}{$\gamma_1$}
   \psfrag{b0}{$\hat\gamma_0 = \gamma_s$}
   \psfrag{b1}{$\hat\gamma_1$}
   \psfrag{c0}{$\tilde\gamma_0$}
   \psfrag{c1}{$\tilde\gamma_1$}
   \psfrag{r}{$r_s$}
   \psfrag{er}{$\epsilon r_s$}
   \centering
   \includegraphics[width=0.9\textwidth]{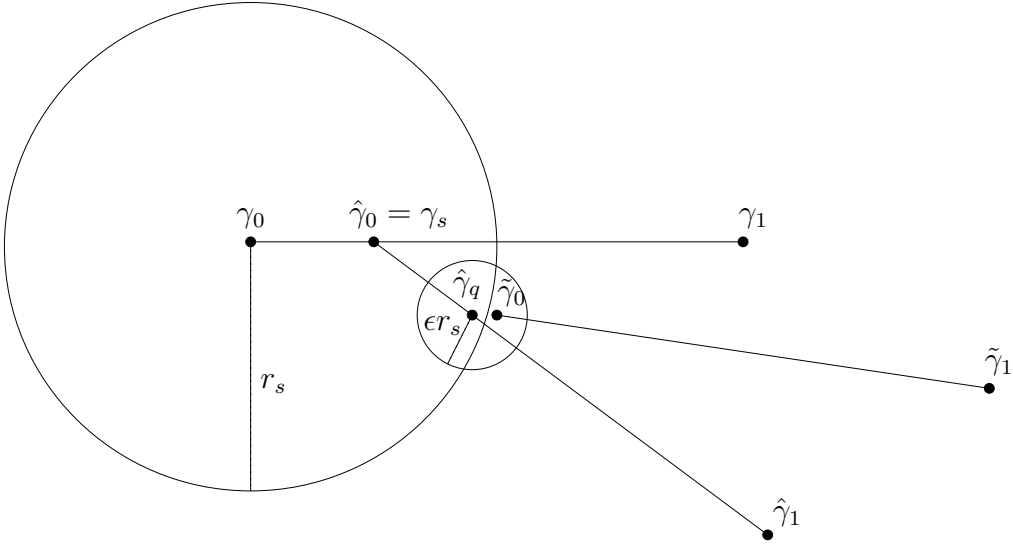}
   \caption{The three curves $\gamma, \hat\gamma$ and $\tilde\gamma$ which are used in the proof.}
   \label{fig:cycmon}
  \end{figure}

  Now we are ready to estimate $d(\hat\gamma_0,\hat\gamma_1)$ from above. For this we use cyclical monotonicity:
  \begin{eqnarray*}
   d^2(\hat\gamma_0,\hat\gamma_1)&+& d^2(\tilde\gamma_0,\tilde\gamma_1)
    \le d^2(\tilde\gamma_0,\hat\gamma_1) + d^2(\hat\gamma_0,\tilde\gamma_1)\\
    &\le& (d(\tilde\gamma_0,\hat\gamma_q) + d(\hat\gamma_q,\hat\gamma_1))^2 +
    (d(\hat\gamma_0,\hat\gamma_q)+d(\hat\gamma_q,\tilde\gamma_0)+d(\tilde\gamma_0,\tilde\gamma_1))^2\\
    &=&  d^2(\tilde\gamma_0,\hat\gamma_q) + 2d(\tilde\gamma_0,\hat\gamma_q)d(\hat\gamma_q,\hat\gamma_1) +
      d^2(\hat\gamma_q,\hat\gamma_1) + d^2(\hat\gamma_0,\hat\gamma_q)
      + 2d(\hat\gamma_0,\hat\gamma_q)d(\hat\gamma_q,\tilde\gamma_0)
      \\&+& 2d(\hat\gamma_0,\hat\gamma_q)d(\tilde\gamma_0,\tilde\gamma_1)
       + d^2(\hat\gamma_q,\tilde\gamma_0) + 2d(\hat\gamma_q,\tilde\gamma_0)d(\tilde\gamma_0,\tilde\gamma_1) +
      d^2(\tilde\gamma_0,\tilde\gamma_1).
   \end{eqnarray*}
    Now, using the inequalities
    $d(\tilde\gamma_0,\hat\gamma_q)<\epsilon r_s$ and
    $d(\gamma_0,\hat\gamma_q)<r_s$ we get that
    $d^2(\hat\gamma_0,\hat\gamma_1)+
    d^2(\tilde\gamma_0,\tilde\gamma_1)$ is bounded from above by
  \begin{eqnarray*}
     & ~\epsilon^2r_s^2 + 2\epsilon r_s(1-q)d(\hat\gamma_0,\hat\gamma_1) + (1-q)^2d^2(\hat\gamma_0,\hat\gamma_1) + q^2d^2(\hat\gamma_0,\hat\gamma_1) \\
      & + 2q\epsilon r_sd(\hat\gamma_0,\hat\gamma_1) + 2qd(\hat\gamma_0,\hat\gamma_1)d(\tilde\gamma_0,\tilde\gamma_1) + \epsilon^2r_s^2 + 2\epsilon r_s d(\tilde\gamma_0,\tilde\gamma_1) + d^2(\tilde\gamma_0,\tilde\gamma_1) \\
    = & ~ 2\epsilon r_s\left(\epsilon r_s + d(\hat\gamma_0,\hat\gamma_1)+ d(\tilde\gamma_0,\tilde\gamma_1)\right) + d^2(\hat\gamma_0,\hat\gamma_1) + d^2(\tilde\gamma_0,\tilde\gamma_1) \\
      & + 2qd(\hat\gamma_0,\hat\gamma_1)d(\tilde\gamma_0,\tilde\gamma_1) + 2(q-1)qd^2(\hat\gamma_0,\hat\gamma_1) \\
    = & ~ 2\epsilon r_s\left(\epsilon r_s + d(\hat\gamma_0,\hat\gamma_1)+ d(\tilde\gamma_0,\tilde\gamma_1)\right) + d^2(\hat\gamma_0,\hat\gamma_1) + d^2(\tilde\gamma_0,\tilde\gamma_1) \\
      & + r_sd(\tilde\gamma_0,\tilde\gamma_1) +
      (q-1)r_sd(\hat\gamma_0,\hat\gamma_1).
  \end{eqnarray*}
  It follows that $2\epsilon r_s\left(\epsilon r_s + d(\hat\gamma_0,\hat\gamma_1)+
  d(\tilde\gamma_0,\tilde\gamma_1)\right)
  + r_sd(\tilde\gamma_0,\tilde\gamma_1) +
      (q-1)r_sd(\hat\gamma_0,\hat\gamma_1)\geq 0$, so that dividing
      by $r_s$ and using \eqref{eq:Lebesguepoint} yields
  \[
   d(\hat\gamma_0,\hat\gamma_1) \le \frac{(1+2\epsilon)d(\tilde\gamma_0,\tilde\gamma_1) + 2\epsilon^2r_s}{1-q-2\epsilon}
    \le \frac{(1+2\epsilon)(1+\epsilon) + 4\epsilon^2s}{1-s-2\epsilon}d(\gamma_0,\gamma_1).
  \]
  Choosing $s$ and $\epsilon$ small enough, depending on $\delta$, we
  achieve \eqref{eq:aes} and conclude the proof.
 \end{proof}


\begin{thebibliography}{10}
  \bibitem{AGS2011} L. Ambrosio, N. Gigli and G. Savar\'e,
     \emph{Calculus and heat flow in metric measure spaces and applications to spaces with Ricci bounds from below},
     preprint, 2011.
  \bibitem{AGS2011b}
  L. Ambrosio, N. Gigli and G. Savar\'e,
     \emph{Metric measure spaces with Riemannian Ricci curvature bounded from below},
     preprint, 2011.
  \bibitem{AGS2008} L. Ambrosio, N. Gigli and G. Savar\'e,
     \emph{Gradient flows in metric spaces and in the space of probability measures},
     Lectures in Mathematics ETH Z\"urich, Birkh\"auser Verlag, Basel, second ed., 2008.
  \bibitem{AR2008} L. Ambrosio and S. Rigot.
     \emph{Optimal mass transportation in the Heisenberg group},
      J. Funct. Anal., \textbf{208} (2004), no. 2, 261--301.
  \bibitem{B2008} J. Bertrand,
     \emph{Existence and uniqueness of optimal maps on Alexandrov spaces},
     Adv. Math., \textbf{219} (2008), no. 3, 838--851.
  \bibitem{B1991} Y. Brenier,
     \emph{Polar factorization and monotone rearrangement of vector valued functions},
     Comm. Pure and Appl. Math., \textbf{64} (1991), 375--417.
  \bibitem{Ch00} J. Cheeger,
     \emph{Differentiability of Lipschitz functions on metric measure spaces},
     GAFA, \textbf{9} (1999), 428--517.
  \bibitem{G2011} N. Gigli,
     \emph{Optimal maps in non branching spaces with Ricci curvature bounded from below},
      preprint, 2011.
  \bibitem{K1942} L. V. Kantorovich,
     \emph{On the translocation of masses},
     C.R. (Dokl.) Acad. Sci. URSS, \textbf{37} (1942), 199--201.
  \bibitem{K1948} L. V. Kantorovich,
     \emph{On a problem of Monge (in Russian)},
     Uspekhi Mat. Nauk. \textbf{3} (1948), 225--226.
  \bibitem{KRS2010} A. K\"aenm\"aki, T. Rajala and V. Suomala,
     \emph{Existence of doubling measures via generalised nested cubes},
     Proc. Amer. Math. Soc., to appear.
  \bibitem{LS1998} J. Luukkainen and E. Saksman,
     \emph{Every complete doubling metric space carries a doubling measure},
     Proc. Amer. Math. Soc., \textbf{126} (1998), no. 2, 531--534.
  \bibitem{M2001} R. McCann,
     \emph{Polar factorization of maps on Riemannian manifolds},
     Geom. Funct. Anal., \textbf{11} (2001), 589--608.
  \bibitem{M1781} G. Monge,
     \emph{M\'emoire sur la th\'eorie des d\'eblais et remblais},
      M\'emoire de l'acad\'emie des sciences de Paris, 1781.
  \bibitem{P2010} A. Petrunin,
      \emph{Alexandrov meets Lott-Villani-Sturm},
      M\"unster J. of Math., to appear.
  \bibitem{SK1987} C. Smith and M. Knott,
      \emph{On the optimal transportation of distributions},
      J. Optim. Theory Appl., \textbf{52} (1987), 323--329.
  \bibitem{V2009} C. Villani,
     \emph{Optimal transport. Old and new},
     vol. 338 of Grundlehren der Mathematischen Wissenschaften, Springer-Verlag, Berlin, 2009.
  \end{thebibliography}
\end{document}